\documentclass[12pt]{article}

\setlength{\textwidth}{6.3in} \setlength{\textheight}{8.7in}
\setlength{\topmargin}{0pt} \setlength{\headsep}{0pt}
\setlength{\headheight}{0pt} \setlength{\oddsidemargin}{0pt}
\setlength{\evensidemargin}{0pt}

\usepackage{amsmath, epsfig, cite, setspace}
\usepackage{amssymb}
\usepackage{amsfonts}
\usepackage{latexsym}
\usepackage{amsthm}

\def\l{\left}
\def\r{\right}
\def\bg{\bigg}
\def\({\bg(}
\def\){\bg)}
\def\t{\text}
\def\f{\frac}

\def\ls{\leqslant}

\def\eq{\equiv}

\def\Z{\mathbb Z}
\def\C{\mathbb C}
\def\N{\mathbb N}

\def\<{\langle}
\def\>{\rangle}

\makeatletter
\renewcommand{\@seccntformat}[1]{{\csname the#1\endcsname}.\hspace{.5em}}
\makeatother

\newtheorem{theorem}{Theorem}[section]

\newtheorem{conjecture}[theorem]{Conjecture}
\newtheorem{lemma}{Lemma}[section]

\renewcommand{\qed}{\hfill$\Box$\medskip}
\renewcommand{\thefootnote}{*}

\numberwithin{equation}{section}

\begin{document}

\begin{center}
{\large\bf  Refinements of Van Hamme's (E.2) and (F.2) supercongruences\\[5pt] and two supercongruences by Swisher}
\end{center}

\vskip 2mm \centerline{Victor J. W. Guo$^1$ and Chen Wang$^2$\footnote{Corresponding author.} }
\begin{center}
{\footnotesize $^1$School of Mathematics, Hangzhou Normal University, Hangzhou 311121, People's Republic of China\\
{\tt  jwguo@math.ecnu.edu.cn}  \\
$^2$Department of Applied Mathematics, Nanjing Forestry University, Nanjing 210037,  People's Republic of China\\
{\tt  cwang@smail.nju.edu.cn} }
\end{center}

\vskip 0.7cm \noindent{\bf Abstract.} In 1997, Van Hamme proposed 13 supercongruences on truncated hypergeometric series.
Van Hamme's (B.2) supercongruence was first confirmed by Mortenson and received a WZ proof by Zudilin later.
In 2012, using the WZ method again, Sun extended Van Hamme's (B.2) supercongruence to the modulus $p^4$ case, where $p$
is an odd prime. In this paper, by using a more general WZ pair, we generalize Hamme's (E.2) and (F.2) supercongruences,
as well as two supercongruences by Swisher, to the modulus $p^4$ case. Our generalizations of these supercongruences
are related to Euler polynomials. We also put forward a relevant conjecture on $q$-congruences for further study.

\vskip 3mm \noindent {\it Keywords}: supercongruence; Legendre symbol; Euler number; Euler polynomials; $q$-congruence
\vskip 0.2cm \noindent{\it AMS Subject Classifications}:  33C20; 11B75; 11B65; 33E50

\renewcommand{\thefootnote}{**}

\section{Introduction}

In 1914, Ramanujan \cite{Ramanujan} listed a number of fastly convergent series for $1/\pi$.  Although Bauer's formula \cite{Bauer} is not listed in \cite{Ramanujan}, it gives such an example:
\begin{align}
\sum^{\infty}_{k=0}(-1)^k(4k+1)\frac{(\frac{1}{2})_k^3}{k!^3} =\frac{2}{\pi}, \label{eq:Ramanujan}
\end{align}
where $(a)_k=a(a+1)\cdots (a+k-1)$ stands for the Pochhammer symbol.
Ramanujan's formulas for $1/\pi$ became famous in 1980's
when they were discovered to provide efficient algorithms for
evaluating decimal digits of $\pi$ (see \cite{BB}).

In 1997, Van Hamme \cite{Hamme} observed that 13 Ramanujan-type series have neat $p$-adic analogues, such as
\begin{align}
\sum^{(p-1)/2}_{k=0}(-1)^k(4k+1)\frac{(\frac{1}{2})_k^3}{k!^3} &\equiv p(-1)^{(p-1)/2} \pmod{p^3},\label{eq:b2} \\
\sum_{k=0}^{(p-1)/3} (-1)^k(6k+1)\frac{(\frac{1}{3})_k^3}{k!^3} &\equiv p\pmod{p^3}\quad\text{for}\ p\equiv 1\pmod{3},  \label{eq:e2}\\[5pt]
\sum_{k=0}^{(p-1)/4}(-1)^k(8k+1)\frac{(\frac{1}{4})_k^3}{k!^3} &\equiv p(-1)^{(p-1)/4}\pmod{p^3} \ \text{for}\ p\equiv 1\pmod{4}, \label{eq:f2}
\end{align}
(tagged (B.2), (E.2), and (F.2) in his list, respectively), where $p$ is an odd prime. Note that we may compute the sum in \eqref{eq:b2} for $k$ up to $p-1$,
since the $p$-adic order of $(\frac{1}{2})_k/k!$ is $1$ for $k$ satisfying $(p+1)/2\leqslant k\leqslant p-1$.
Congruences of this kind are known as Ramanujan-type supercongruences nowadays. The supercongruence \eqref{eq:b2} was first confirmed by Mortenson \cite{Mortenson}
employing a $_6F_5$ transformation and the $p$-adic Gamma function in 2008, and reproved by Zudilin \cite{Zudilin} with the help of the WZ (Wilf--Zeilberger \cite{WZ1,WZ2}) method, and
by Long \cite{Long} using hypergeometric identities. Swisher \cite{Swisher} utilized Long's method to prove \eqref{eq:e2} and \eqref{eq:f2}. Almost at the same time,
He \cite{He} also applied Long's method to present a generalization of \eqref{eq:e2} and \eqref{eq:f2}.

In 2012, making use of the WZ method again, Sun \cite{Sun} proved the following refinement of \eqref{eq:b2}: for any prime $p>3$,
\begin{align}
\sum^{m}_{k=0}(-1)^k(4k+1)\frac{(\frac{1}{2})_k^3}{k!^3} \equiv p(-1)^{(p-1)/2}+p^3E_{p-3} \pmod{p^4},\label{eq:Sun}
\end{align}
where $m=p-1$ or $(p-1)/2$, and $E_{p-3}$ is the $(p-3)$th Euler number, which may be defined as
$$
\sum_{n=0}^\infty E_n\frac{x^n}{n!}=\frac{2}{e^x+e^{-x}}.
$$
Recall that the Euler polynomials $E_n(x)$ can be defined by the following generating function:
$$
\sum_{n=0}^\infty E_n(x)\frac{t^n}{n!}=\frac{2e^{xt}}{e^t+1}.
$$
It is easy to see that $E_{p-3}=2^{p-3}E_{p-3}(\frac12)\equiv \frac{1}{4}E_{p-3}(\frac12)\pmod{p}$ for any prime $p>3$.

In this paper, we shall establish the following refinements of \eqref{eq:e2} and \eqref{eq:f2}.

\begin{theorem}\label{thm:main-1}
Let $p\equiv 1\pmod{3}$ be a prime. Then
\begin{align}
\sum_{k=0}^{M}(-1)^k(6k+1)\frac{(\frac{1}{3})_k^3}{k!^3}
\equiv p+\frac{p^3}{9}E_{p-3}(\tfrac{1}{3}) \pmod{p^4}, \label{eq:e2-mod4}
\end{align}
where $M=(p-1)/3$ or $p-1$.
\end{theorem}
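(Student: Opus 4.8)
\medskip
\noindent\textbf{Proof proposal.}

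The plan is to adapt to the parameter $\tfrac13$ the WZ route that Zudilin \cite{Zudilin} used for \eqref{eq:b2} and that Sun \cite{Sun} refined to obtain \eqref{eq:Sun}. One starts from an explicit WZ pair $\bigl(F(n,k),G(n,k)\bigr)$ --- more general than Sun's, in that the role played there by $\tfrac12$ is taken over by a free parameter which we specialize to $\tfrac13$ --- assembled from the Pochhammer symbols $(\tfrac13)_k$, $(\tfrac13)_n$, $(\tfrac13)_{n-k}$ and ordinary factorials, exhibiting the factor $(-1)^k(6k+1)(\tfrac13)_k^3/k!^3$, and satisfying the certificate identity
\[
F(n+1,k)-F(n,k)=G(n,k+1)-G(n,k)\qquad(\forall\,n,k).
\]
Summing this over $0\le k\le n$ (which telescopes the right side in $k$) and then accumulating over $n$ produces a closed identity of the shape
\[
\sum_{k=0}^{n}(-1)^k(6k+1)\frac{(\tfrac13)_k^3}{k!^3}=C(n)+R(n),
\]
with $C(n)$ an explicit product of Pochhammer symbols and $R(n)$ a single ``correction'' sum built from boundary values of $G$. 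A useful sanity check is that \eqref{eq:Sun} may be recast uniformly as $\tfrac{p^3}{r^2}E_{p-3}(\tfrac1r)$ with $r=2$ (via $E_{p-3}=2^{p-3}E_{p-3}(\tfrac12)\equiv\tfrac14E_{p-3}(\tfrac12)$), so \eqref{eq:e2-mod4} is exactly the $r=3$ instance; this should help in choosing the pair.

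One then puts $n=(p-1)/3$. The product $C((p-1)/3)$ should reduce, modulo $p^4$, to $p$ plus terms of $p$-adic valuation $\ge 3$ --- the main term $p$ arising because a Pochhammer factor like $(\tfrac13)_{(p+2)/3}$ acquires the factor $\tfrac{p}{3}$ from $3j-2=p$ at $j=(p+2)/3$ --- which already recovers \eqref{eq:e2}. The equivalence of the two choices $M=(p-1)/3$ and $M=p-1$ modulo $p^4$ I would treat separately: for $(p+2)/3\le k\le p-1$ the symbol $(\tfrac13)_k$ picks up exactly one factor $p$ (and no more, since $3j-2\equiv0\pmod p$ has only $j=(p+2)/3$ in $1\le j\le p-1$), so $(\tfrac13)_k^3/k!^3$ has valuation $3$; reindexing $k=(p-1)/3+\ell$ and reducing the surviving product modulo $p$ rewrites the tail $\sum_{k=(p+2)/3}^{p-1}$ as $p^3$ times a short truncated hypergeometric sum in $\ell$, which one shows vanishes modulo $p$ --- say by a termwise reflection $\ell\leftrightarrow\tfrac{2(p-1)}{3}+1-\ell$, or by a second, shorter use of the same WZ pair.

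The heart of the matter is the $p$-adic expansion of $R((p-1)/3)$. Each term is a quotient of Pochhammer symbols whose factors are $3j-2$, $3j-1$ or $3j$ perturbed by multiples of $p$; writing $3j-c+pt=(3j-c)\bigl(1+pt/(3j-c)\bigr)$ and expanding produces $1+pH_1+p^2H_2+p^3H_3+\cdots$, with the $H_i$ harmonic-type sums $\sum 1/(3j-2)$, $\sum 1/(3j-2)^2$, $\sum 1/(3j)^2$, and so on, over ranges fixed by $(p-1)/3$. Combining this with the expansion of $C$ and using \eqref{eq:e2} (which forces everything below order $p^3$ to collapse to $p$), the truncated sum equals, modulo $p^4$, $p$ plus $p^3$ times an explicit $\Z_p$-combination of the weight-two sums $\sum 1/(3j-c)^2$; the identities $1/(3j-c)^2=\tfrac19(j-c/3)^{-2}$ are precisely what will put the $\tfrac19=1/r^2$ into \eqref{eq:e2-mod4}.

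The final --- and hardest --- step is to identify that $\Z_p$-combination with $\tfrac19E_{p-3}(\tfrac13)$ modulo $p$. Here I would apply Fermat's little theorem in the form $m^{-2}\equiv m^{p-3}\pmod p$ for $p\nmid m$, together with the classical finite-difference evaluation
\[
\sum_{j=0}^{s-1}(-1)^j(x+j)^{n}=\tfrac12\bigl(E_n(x)+(-1)^{s-1}E_n(x+s)\bigr)
\]
at $n=p-3$, so that each sum $\sum 1/(3j-c)^2$ becomes $\tfrac19$ times a sum of the left-hand type with $x$ a $3$-adic unit. Reducing the shifted arguments $x+s$ modulo $p$, one then uses $E_{p-3}(0)=E_{p-3}(1)=0$ for $p>3$ (since $E_{p-3}(0)$ is a rational multiple of $B_{p-2}$, which vanishes for $p>3$), the reflection $E_n(1-x)=(-1)^nE_n(x)$ (with $p-3$ even), the translation $E_n(x+1)=2x^n-E_n(x)$, and the $3$-distribution relation for Euler polynomials, as needed, to fold every contribution back onto $E_{p-3}(\tfrac13)$ plus explicit rational constants. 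The delicate part --- and the place where the specific shape of the WZ pair, not just some WZ proof of \eqref{eq:e2}, is being exploited --- is the bookkeeping: verifying that all those explicit constants and sign patterns (including the parity of $(p-1)/3$) cancel, that no residual multiple of $B_{p-3}$ survives, and that the coefficient of $E_{p-3}(\tfrac13)$ comes out to exactly $\tfrac19$.
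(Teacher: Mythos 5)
Your proposal follows essentially the same route as the paper: the authors implement precisely this plan by keeping the parameter free (a $p$-adic $\alpha$, Theorem \ref{main1}), using the WZ pair $F(n,k),G(n,k)$ of Section 3, evaluating the boundary product and correction sums via harmonic-number expansions (Lemmas \ref{wzprod}--\ref{sigma}), converting the resulting $\sum_{k\le a}(-1)^k/k^2$ into $E_{p-3}(\alpha)$ through Fermat's little theorem and the alternating-power-sum identity of Lemma \ref{Euler}, and relating the two truncations $M=(p-1)/3$ and $M=p-1$ by exactly your valuation-three-plus-reflection tail argument. The strategy and all key ingredients coincide; the computations you defer as ``bookkeeping'' are the content of the paper's Lemmas \ref{a=0}--\ref{sigma} and Section 3.
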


\begin{theorem}\label{thm:main-2}
Let $p\equiv 1\pmod{4}$ be a prime. Then
\begin{align}
\sum_{k=0}^{M}(-1)^k(8k+1)\frac{(\frac{1}{4})_k^3}{k!^3}
\equiv p(-1)^{(p-1)/4}+\frac{p^3}{16}E_{p-3}(\tfrac{1}{4}) \pmod{p^4},  \label{eq:f2-mod4}
\end{align}
where $M=(p-1)/4$ or $p-1$.
\end{theorem}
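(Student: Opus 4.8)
\medskip
\noindent\emph{Strategy.}
The plan is to prove Theorems~\ref{thm:main-1} and~\ref{thm:main-2} together, by treating the uniform truncated sum $\sum_k(-1)^k(2dk+1)(\f1d)_k^3/k!^3$ for $d=3$ and $d=4$; note that the case $d=2$ is Sun's \eqref{eq:Sun}, since $E_{p-3}\eq\f1{2^2}E_{p-3}(\f12)\pmod p$, so in all three cases the target right-hand side is $p(-1)^{(p-1)/d}+\f{p^3}{d^2}E_{p-3}(\f1d)$. The first step is to write down a WZ pair $(F,G)$ generalizing the one behind \eqref{eq:b2} and \eqref{eq:Sun}: let $F(n,k)$ be the summand $(-1)^k(2dk+1)(\f1d)_k^3/k!^3$ of \eqref{eq:e2-mod4}/\eqref{eq:f2-mod4} times a closing factor --- a ratio of Pochhammer symbols in $n$, $k$ and $\f1d$ that equals $1$ at $n=0$ and makes the $k$-sum terminate --- and let $G(n,k)=R(n,k)\,F(n,k)$ for the matching rational certificate $R$, so that $F(n+1,k)-F(n,k)=G(n,k+1)-G(n,k)$. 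For $d=2$ this is precisely the pair exploited by Zudilin and Sun; the point of ``a more general WZ pair'' is that the whole construction survives verbatim with $\f12$ replaced by $\f1d$.

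Summing the WZ relation over $k\gs0$, telescoping in $n$, and specializing the auxiliary parameter to $\f1d$ and $n$ to a value $N$ tied to $(p-1)/d$, one gets an \emph{exact} identity of the form
\begin{align*}
\sum_{k=0}^{N}(-1)^k(2dk+1)\f{(\f1d)_k^3}{k!^3}=\Phi\l(N,\tfrac1d\r),
\end{align*}
where $\Phi$ is an explicitly summable closed-form expression in $N$ and $\f1d$ assembled from the boundary values $G(n,0)$ of the telescoping. This identity is the backbone of everything that follows.

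The heart of the proof is the $p$-adic analysis of this identity. Substituting the prime $p$ and expanding $\Phi$ modulo $p^4$ --- using Wolstenholme-type congruences, the harmonic-sum expansions of Pochhammer ratios such as $(\f1d)_{(p-1)/d}/((p-1)/d)!$, and elementary expansions like $\f1{p+x}\eq\f1x-\f p{x^2}+\f{p^2}{x^3}\pmod{p^3}$ --- one finds that $\Phi$ reproduces $p(-1)^{(p-1)/d}$ modulo $p^3$ (so the argument reproves \eqref{eq:e2}, \eqref{eq:f2} and Swisher's companions along the way) and contributes a further term of exact $p$-adic order $3$ whose coefficient, read modulo $p$, is an explicit rational multiple of a sum of the shape $\sum_k\f{(-1)^k}{(dk+1)^2}$. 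To identify this sum, substitute $\f1{x^2}\eq x^{p-3}\pmod p$, so that it becomes $d^{p-3}\sum_k(-1)^k\l(k+\tfrac1d\r)^{p-3}$ modulo $p$, and apply the telescoping identity $\sum_{k=0}^{N-1}(-1)^k(k+x)^m=\f12\l(E_m(x)+(-1)^{N-1}E_m(N+x)\r)$ with $x=\f1d$ and $m=p-3$; since $p-3$ is even and positive one has $E_{p-3}(0)=0$, so the relevant boundary value vanishes modulo $p$, and with $d^{p-3}\eq d^{-2}\pmod p$ the sum reduces to a rational multiple of $E_{p-3}(\f1d)$ modulo $p$. Tracking the precise constant through $\Phi$ then yields exactly the coefficients $\f19=\f1{3^2}$ and $\f1{16}=\f1{4^2}$ in \eqref{eq:e2-mod4} and \eqref{eq:f2-mod4} (and, for $d=2$, reproduces Sun's $p^3E_{p-3}$, since $\f1{2^2}E_{p-3}(\f12)\eq E_{p-3}\pmod p$). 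Finally, to pass from $M=(p-1)/d$ to $M=p-1$, one checks that the extra block $\sum_{k=(p-1)/d+1}^{p-1}(-1)^k(2dk+1)(\f1d)_k^3/k!^3$ vanishes modulo $p^4$: writing $k=(p-1)/d+j$ shows each such term has $p$-adic order exactly $3$, so after dividing by $p^3$ this is a congruence modulo $p$ for an explicit finite sum in $j$, which one kills by a reflection $j\mapsto(\t{const})-j$ --- or, more directly, by feeding the other admissible value of $N$ into the WZ identity.

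\textbf{Main obstacle.} The WZ step is a finite certificate verification and the closing Euler-polynomial identity is classical; the genuinely delicate part is pushing the $p$-adic expansion of $\Phi$ to modulus $p^4$ while keeping the order-$p^3$ term alive. One must expand several interacting harmonic sums --- $\sum 1/(dk+1)$, $\sum 1/(dk+1)^2$ and products of such --- to the correct precision and then verify that \emph{every} spurious contribution of order $p^3$ either cancels or is itself $\eq0\pmod p$, leaving just the single sum $\sum(-1)^k/(dk+1)^2$. Arranging these cancellations, and doing so uniformly for $d\in\{2,3,4\}$ rather than case by case, is where the real effort goes.
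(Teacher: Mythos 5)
Your strategy is the same one the paper follows: the authors also take a WZ pair in which the $\tfrac12$ of Zudilin--Sun is replaced by a free parameter $\alpha$ (for this theorem $\alpha=\tfrac14$, $a=\langle-\alpha\rangle_p=(p-1)/4$, $\alpha+a=pt$ with $t=\tfrac14$), namely $F(n,k)=(-1)^{n+k}(2n+\alpha)(\alpha)_n^2(\alpha)_{n+k}/\bigl((1)_n^2(1)_{n-k}(\alpha)_k^2\bigr)$ with its companion $G$; telescoping gives the truncated sum as $\frac{(\alpha)_{2p-1}}{(1)_{p-1}^2}-\frac{(\alpha)_p^2}{(1)_{p-1}^2}\sum_{k=1}^{p-1}(-1)^k\frac{(\alpha)_{p+k-1}}{(1)_{p-k}(\alpha)_k^2}$, which is then expanded modulo $p^4$; the surviving order-$p^3$ term is identified with $E_{p-3}(\alpha)$ via $1/k^2\equiv k^{p-3}\pmod p$ and the classical formula for $\sum(-1)^kk^m$; and the passage from $M=p-1$ to $M=(p-1)/4$ is handled exactly as you suggest (each extra term has $p$-adic order $3$, and the residual sum modulo $p$ dies under the reflection $k\mapsto p-a-2-k$). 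So in outline you have reconstructed the paper's proof, including the correct bookkeeping $(8k+1)=4(2k+\tfrac14)$ behind the coefficient $\tfrac1{16}$.

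But as a proof there is a genuine gap: everything that actually decides the theorem is asserted rather than carried out. You never write down the WZ pair or its certificate, and, more importantly, the boundary contribution is not an ``explicitly summable closed-form'' $\Phi$: after telescoping one is left with $\frac{(\alpha)_{2p-1}}{(1)_{p-1}^2}$ minus a sum of $p-1$ terms $G(p,k)$, and the whole difficulty is to expand that sum modulo $p^4$ --- in the paper this means splitting it at $k\le a$, $k=a+1$, $k\ge a+2$, expanding each block with Lehmer's congruences, the harmonic-number expansions of shifted Pochhammer products, and the binomial--harmonic identities $\sum(-1)^k\binom nk/k=-H_n$, $\sum(-1)^k\binom nk/k^2=-\tfrac12(H_n^{(2)}+H_n^2)$, $\sum(-1)^k\binom nkH_k/k=-H_n^{(2)}$, and then verifying that all order-$p^3$ contributions cancel except $(-1)^a2p^3t^3\sum_{k=1}^a(-1)^k/k^2$. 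That cancellation is precisely what fixes the coefficient $\tfrac{p^3}{16}E_{p-3}(\tfrac14)$ and what makes the congruence hold modulo $p^4$ at all; in your write-up it appears only as ``one finds that'' and is explicitly deferred as the ``main obstacle.'' Until those expansions (the analogues of the paper's Lemmas 2.3--2.8) are actually performed, the argument establishes nothing beyond the already known modulus-$p^3$ case.
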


In 2015, Swisher \cite[(1), (2)]{Swisher} proved the following supercongruences, which are very similar to \eqref{eq:e2} and \eqref{eq:f2}:
\begin{align}
\sum_{k=0}^{(2p-1)/3} (-1)^k(6k+1)\frac{(\frac{1}{3})_k^3}{k!^3} &\equiv -2p\pmod{p^3}\quad\text{for}\ p\equiv 2\pmod{3},  \label{eq:swisher-e2}\\[5pt]
\sum_{k=0}^{(3p-1)/4}(-1)^k(8k+1)\frac{(\frac{1}{4})_k^3}{k!^3} &\equiv 3p(-1)^{(3p-1)/4}\pmod{p^3} \ \text{for}\ p\equiv 3\pmod{4}, \label{eq:swisher-f2}
\end{align}
where $p$ is a prime.

In this paper, we shall give the following refinements of \eqref{eq:swisher-e2} and \eqref{eq:swisher-f2}.

\begin{theorem}\label{thm:main-3}
Let $p\equiv 2\pmod{3}$ be a prime. Then
\begin{align}
\sum_{k=0}^{M}(-1)^k(6k+1)\frac{(\frac{1}{3})_k^3}{k!^3}
\equiv -2p+\frac{8p^3}{9}E_{p-3}(\tfrac{1}{3}) \pmod{p^4},  \label{eq:sw-e2-mod4}
\end{align}
where $M=(2p-1)/3$ or $p-1$.
\end{theorem}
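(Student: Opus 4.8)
\medskip

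The plan is to prove Theorem~\ref{thm:main-3} by the Wilf--Zeilberger method, re-running the argument behind Theorem~\ref{thm:main-1} --- which uses the same ``more general'' WZ pair with its free parameter specialized to $\tfrac13$ --- but now in the residue class $p\equiv 2\pmod 3$ and with the sum carried out to the longer range $(2p-1)/3$. One works from a pair of hypergeometric terms $(F(n,k),G(n,k))$, where $G(n,k)=R(n,k)F(n,k)$ for an explicit rational certificate $R$, satisfying the WZ equation $F(n+1,k)-F(n,k)=G(n,k+1)-G(n,k)$ and normalized so that $G(n,0)$ is $(-1)^n(6n+1)\frac{(\frac13)_n^3}{n!^3}$ up to an elementary factor, while the horizontal sum $\sum_k F(0,k)$ has a closed evaluation. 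Verifying the WZ equation is a routine certificate check via Gosper's algorithm, which I would simply quote from the proof of Theorem~\ref{thm:main-1}.

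Next, one sums the WZ equation over $0\le n\le N$ and over $k$, for a cutoff $N$ depending on $p$. Telescoping in $n$ collapses the left side to $\sum_k(F(N+1,k)-F(0,k))$, and telescoping in $k$ turns the right side into $-\sum_{n=0}^{N}G(n,0)$ plus a large-$k$ boundary term that is killed modulo $p^4$ by a $p$-adic valuation estimate on $(\frac13)_k^3/k!^3$; here the decisive point is that for $p\equiv 2\pmod 3$ the number $(2p-1)/3$ is an integer, $3\cdot\frac{2p-1}{3}+1=2p$, and $(\frac13)_k/k!$ already has $p$-adic order at least $1$ once $k>\frac{2p-1}{3}$. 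This yields an expression for $\sum_{k=0}^{(2p-1)/3}(-1)^k(6k+1)\frac{(\frac13)_k^3}{k!^3}$ modulo $p^4$ as one evaluated $F$-sum plus lower-order $G$-contributions. The same valuation estimate, now applied to the block $\frac{2p-1}{3}<k\le p-1$ (where each term has $p$-adic order only $3$), must be upgraded to the statement $\sum_{k=(2p+2)/3}^{p-1}(-1)^k(6k+1)\frac{(\frac13)_k^3}{k!^3}\equiv 0\pmod{p^4}$; this reconciles the two admissible values $M=(2p-1)/3$ and $M=p-1$, and I would prove it as in Theorem~\ref{thm:main-1}, via the reflection $k\mapsto p-1-k$ together with a mod-$p$ congruence for the resulting incomplete harmonic sum.

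The final step is the $p$-adic expansion of the surviving terms. Each Pochhammer quotient in $F(0,k)$, $F(N+1,k)$ and the $G$-contributions is written as a unit times a power of $p$ and expanded to order $p^3$; this introduces partial harmonic sums over the residue class $1\pmod 3$, with alternating signs, of the shape $\sum\frac{(-1)^j}{3j+1}$ and $\sum\frac{(-1)^j}{(3j+1)^2}$. The order-$p$ part reassembles Swisher's main term $-2p$ of \eqref{eq:swisher-e2}: the unit $2$ comes from $2p=3\cdot\frac{2p-1}{3}+1$, the precise spot where the factor of $p$ enters $(\frac13)_k$, and the sign is $(-1)^{(2p-1)/3}=-1$ (the exponent $(2p-1)/3$ is always odd when $p\equiv 2\pmod 3$). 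The order-$p^3$ part reduces modulo $p$ to one of the quadratic harmonic sums, which one evaluates by the standard congruence expressing $\sum_j\frac{(-1)^j}{(3j+1)^2}$, over the relevant range, in terms of $E_{p-3}(\tfrac13)$ --- the residue-class-$1\pmod 3$ analogue of the congruence $E_{p-3}\equiv\tfrac14 E_{p-3}(\tfrac12)\pmod p$ recalled in the introduction. Since the factor of $p$ now sits at $3j+1=2p$ and is cubed inside $(\frac13)_k^3$, the correction acquires the extra factor $2^3=8$ relative to the $\tfrac{p^3}{9}E_{p-3}(\tfrac13)$ of Theorem~\ref{thm:main-1}, and after the sign bookkeeping and an application of $j^{p-1}\equiv 1\pmod p$ it becomes exactly $\tfrac{8p^3}{9}E_{p-3}(\tfrac13)$.

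The main obstacle, as I see it, is this last step: extracting the order-$p^3$ coefficient exactly. Two things have to be controlled. First, the WZ pair and the cutoff $N$ must be chosen so that the surviving boundary terms aggregate cleanly, with no stray contributions of order $p^2$ or $p^3$ leaking from the tails. Second, one needs the correct incomplete harmonic-sum congruences modulo $p$ for the residue class $1\pmod 3$ when $p\equiv 2\pmod 3$: the range $0\le j\le\frac{2p-1}{3}$ meets the residues mod $p$ in a different pattern than $0\le j\le\frac{p-1}{3}$ does when $p\equiv 1\pmod 3$, and it is precisely this difference in bookkeeping --- together with the cube of the unit $2$ --- that turns the constant $\tfrac19$ of Theorem~\ref{thm:main-1} into $\tfrac89$ here.
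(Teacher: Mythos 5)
Your outline follows the same route the paper actually takes: the paper proves a single general statement (Theorem \ref{main1}, via the WZ pair $F(n,k)$, $G(n,k)$ built from $(\alpha)_n$ and the telescoped identity \eqref{wzkey}) and then Theorem \ref{thm:main-3} is just the specialization $\alpha=\tfrac13$, $p\equiv2\pmod3$, where $\langle-\tfrac13\rangle_p=(2p-1)/3$ is odd and $\alpha+\langle-\alpha\rangle_p=\tfrac{2p}{3}$, so the right-hand side $(-1)^a pt+(pt)^3E_{p-3}(\alpha)$ becomes $-\tfrac{2p}{3}+\tfrac{8p^3}{27}E_{p-3}(\tfrac13)$ and multiplying by $3$ gives \eqref{eq:sw-e2-mod4}. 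Your numerology for the constants (the unit $2$ from $3\cdot\tfrac{2p-1}{3}+1=2p$, the odd exponent $(2p-1)/3$, the factor $2^3=8$) is exactly this specialization, so the plan is aimed correctly.

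However, as a proof there is a genuine gap, and it sits precisely where you yourself locate ``the main obstacle.'' The heart of the paper's argument is the exact mod-$p^4$ evaluation of the surviving WZ boundary quantities: Lemma \ref{wzprod} for $(\alpha)_{2p-1}/(1)_{p-1}^2$, Lemma \ref{Sigma1} for the block $1\le k\le a$, Lemma \ref{Prod} for the single term $k=a+1$, and Lemma \ref{sigma} for $a+2\le k\le p-1$, each expanded to order $p^4$ in terms of $H_a$, $H_a^2$, $H_a^{(2)}$, followed by a nontrivial cancellation in which all of these harmonic contributions (including the order-$p^2$ and order-$p^3$ ``stray'' terms you hope do not leak) collapse to $(-1)^apt+2(-1)^ap^3t^3\sum_{k=1}^{a}(-1)^k/k^2$; the Euler polynomial then enters through the explicit congruence $\sum_{k=1}^{a}(-1)^k/k^2\equiv\frac{(-1)^a}{2}E_{p-3}(\alpha)\pmod p$ obtained from Fermat's little theorem and the alternating power-sum identity of Lemma \ref{Euler}. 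None of this is carried out or even precisely stated in your proposal: you assert the terms ``aggregate cleanly'' and invoke an unspecified ``standard congruence'' for $\sum_j(-1)^j/(3j+1)^2$ over the relevant range, which is exactly the content that must be proved (and whose range bookkeeping for $a=(2p-1)/3$ you flag as unresolved). Two smaller points: the reflection $k\mapsto p-1-k$ does not map the tail block $\{(2p+2)/3,\dots,p-1\}$ to itself; the working symmetry (as in the paper's proof of \eqref{main1eq'}) is obtained after shifting $k=j+a+1$ and reflecting $j\mapsto p-a-2-j$, with the prefactor $(\alpha)_{a+1}^3/(1)_{a+1}^3$ supplying $p^3$ and the reflected binomial sum supplying the extra $p$. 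Also note that the boundary sums are not ``killed by a valuation estimate'': they carry both the main term $-2p$ and the $E_{p-3}(\tfrac13)$ correction, so the valuation argument only disposes of the far-end telescoping term, not of the work above.
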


\begin{theorem}\label{thm:main-4}
Let $p\equiv 3\pmod{4}$ be a prime. Then
\begin{align}
\sum_{k=0}^{M}(-1)^k(8k+1)\frac{(\frac{1}{4})_k^3}{k!^3}
\equiv 3p(-1)^{(3p-1)/4}+\frac{27p^3}{16}E_{p-3}(\tfrac{1}{4}) \pmod{p^4},  \label{eq:sw-f2-mod4}
\end{align}
where $M=(3p-1)/4$ or $p-1$.
\end{theorem}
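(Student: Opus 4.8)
\emph{Plan of proof.} The idea is to run, with the free parameter specialized to the value governing the $(\tfrac{1}{4})_k^3/k!^3$ family, the same parametrized WZ argument that underlies Theorems~\ref{thm:main-1}--\ref{thm:main-3}. One starts from the creative-telescoping relation $F(n+1,k)-F(n,k)=G(n,k+1)-G(n,k)$ of the WZ pair, whose $n=0$ row is the summand $(-1)^k(8k+1)(\tfrac{1}{4})_k^3/k!^3$; summing this relation over suitable ranges of $n$ and of $k$ with $0\le k\le M$, the telescoping in $k$ leaves two boundary columns and the telescoping in $n$ collapses the double sum to an identity expressing $\sum_{k=0}^{M}(-1)^k(8k+1)(\tfrac{1}{4})_k^3/k!^3$ in a form suitable for $p$-adic evaluation. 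The cut-off $M=(3p-1)/4$ is forced by arithmetic: for $p\equiv 3\pmod 4$ one has $\tfrac{1}{4}\equiv\tfrac{p+1}{4}\pmod p$, so $\tfrac{1}{4}+j\equiv 0\pmod p$ first occurs at $j=(3p-1)/4$ and $(\tfrac{1}{4})_k$ acquires its first (single) factor of $p$ exactly at $k=(3p+3)/4$; this is what Swisher's \eqref{eq:swisher-f2} reflects and it is why the extra interval $[(3p+3)/4,\,p-1]$ contributes nothing modulo $p^4$.

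First I would reduce to $M=(3p-1)/4$ by showing $\sum_{k=(3p+3)/4}^{p-1}(-1)^k(8k+1)(\tfrac{1}{4})_k^3/k!^3\equiv 0\pmod{p^4}$. On this interval $v_p((\tfrac{1}{4})_k)=1$ and $v_p(k!)=0$, so each term is divisible by $p^3$; writing $k=(3p-1)/4+j$ with $1\le j\le(p-3)/4$ and reducing the $p^{-3}$-parts modulo $p$, one gets $8k+1\equiv 8j-1$, $(-1)^k=(-1)^{(3p-1)/4}(-1)^j$, and the residual sum equals a fixed nonzero constant times $\sum_{j=1}^{(p-3)/4}(-1)^j(8j-1)\,(j-1)!^3/(\tfrac{3}{4})_j^3$, which vanishes modulo $p$ (by a telescoping/Gosper argument, or by comparing the master WZ identity at the two values of $M$). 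Hence it suffices to treat $M=(3p-1)/4$.

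With $M=(3p-1)/4$, the WZ identity writes the truncated series as a boundary contribution plus an error of $p$-adic order at least $3$. The boundary contribution is, up to the rational prefactor supplied by the WZ pair, essentially the summand at $k=M$: here $8M+1=6p-1$, $(-1)^M=(-1)^{(3p-1)/4}$, and the prefactor carries a single factor of $p$ coming from $(\tfrac{1}{4})_{M+1}=(\tfrac{1}{4})_M\cdot\tfrac{3p}{4}$ --- the origin of the multiplier $3$ in front of $p$, just as $\tfrac{2p}{3}$ from $(\tfrac{1}{3})_{M+1}$ is the origin of the $-2$ in Theorem~\ref{thm:main-3}. Evaluating this contribution modulo $p^4$, which requires a Wilson-type evaluation of $(\tfrac{1}{4})_M/M!$ and an expansion of the prefactor to the relevant order, yields the leading value $3p(-1)^{(3p-1)/4}$. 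Each summand of the error term is divisible by $p^3$; factoring $p^3$ out and reducing modulo $p$, after a second-order $p$-adic expansion of the Pochhammer products, it becomes a rational multiple of a finite harmonic-type sum of the shape $\sum 1/(4j-1)^2$ over the relevant range. Feeding in the standard congruence that evaluates this sum in terms of $E_{p-3}(\tfrac{1}{4})$ --- the same input producing the $\tfrac{1}{9}E_{p-3}(\tfrac{1}{3})$ and $\tfrac{1}{16}E_{p-3}(\tfrac{1}{4})$ of Theorems~\ref{thm:main-1} and~\ref{thm:main-2} --- and combining it with the $\tfrac{27}{64}p^3$ coming from $(\tfrac{3p}{4})^3$ pins the coefficient down to $\tfrac{27}{16}$, which completes the proof.

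The step I expect to be the main obstacle is the third-order $p$-adic bookkeeping of the boundary contribution: one must expand $(\tfrac{1}{4})_M/M!$, the WZ prefactor, and the factor $8M+1$ each to relative order $p^3$, keep careful track of the interaction of $(-1)^M$ with $(-1)^{(3p-1)/4}$, and check that all the $O(p)$ and $O(p^2)$ corrections either cancel among themselves or coalesce into the single Euler-polynomial sum with nothing stray surviving. A secondary difficulty is giving a clean proof that $\sum_{j=1}^{(p-3)/4}(-1)^j(8j-1)(j-1)!^3/(\tfrac{3}{4})_j^3\equiv 0\pmod p$, which is what legitimizes replacing $M=(3p-1)/4$ by $M=p-1$. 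Apart from these points the argument is a routine, if calculation-heavy, transcription of the proofs already carried out for Theorems~\ref{thm:main-1}--\ref{thm:main-3}.
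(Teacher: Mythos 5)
Your plan follows the same route as the paper: there, Theorem \ref{thm:main-4} is obtained by specializing the general parametrized WZ result (Theorem \ref{main1}) to $\alpha=\tfrac14$ with $p\equiv 3\pmod 4$, where $\langle-\tfrac14\rangle_p=(3p-1)/4$ and $\alpha+\langle-\alpha\rangle_p=\tfrac{3p}{4}$, and then multiplying \eqref{main1eq} and \eqref{main1eq'} by $4$; your bookkeeping ($8M+1=6p-1$, the single factor $\tfrac{3p}{4}$, $4\cdot(3p/4)^3=\tfrac{27}{16}p^3$) is consistent with that. The problem is that what you submit is a plan whose decisive step is asserted rather than proved. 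After telescoping the WZ pair one gets the identity \eqref{wzkey}, and the whole content of the theorem lies in evaluating its right-hand side modulo $p^4$: this is Lemmas \ref{wzprod}, \ref{Sigma1}, \ref{Prod} and \ref{sigma} of the paper, together with the computation in Section 3 in which all the $H_a$, $H_a^2$, $H_a^{(2)}$ and $1/(a+1)$ corrections cancel, leaving exactly $(-1)^a pt+2(-1)^a p^3t^3\sum_{k=1}^{a}\frac{(-1)^k}{k^2}$; the Euler polynomial then enters through $\frac{1}{k^2}\equiv k^{p-3}\pmod p$ and the identity $\sum_{k=1}^{a}(-1)^k k^{p-3}=\frac{(-1)^a}{2}\bigl(E_{p-3}(a+1)+(-1)^aE_{p-3}(0)\bigr)$ of Lemma \ref{Euler}, not through a ``$\sum 1/(4j-1)^2$'' congruence quoted from elsewhere. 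You explicitly defer precisely this third-order cancellation as ``the main obstacle,'' but that cancellation is where the values $3p(-1)^{(3p-1)/4}$ and $\tfrac{27}{16}$ are actually established, so the proposal does not yet prove the statement.

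A secondary gap: for passing between $M=(3p-1)/4$ and $M=p-1$, your residual-sum claim is true, but neither ``telescoping/Gosper'' nor ``comparing the master WZ identity at the two values of $M$'' is an argument. The paper's proof of \eqref{main1eq'key} is short and elementary: factor out $(\alpha)_{a+1}^3/(1)_{a+1}^3\equiv 0\pmod{p^3}$, reduce the remaining sum modulo $p$ to $\sum_{k=0}^{p-a-2}\frac{2k+a+2}{\binom{p-a-2}{k}^3}$, and observe that the reversal $k\mapsto p-a-2-k$ sends this sum to its own negative modulo $p$. You would still need to supply this (or an equivalent) argument. In short: right approach, correct target constants, but the core modulo-$p^4$ computation and the tail lemma are missing, and they constitute essentially all of the paper's proof.
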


For any $p$-adic integer $x$, let $\langle x\rangle_p$ stands for the least non-negative residue of $x$ modulo $p$.
We shall prove Theorems \ref{thm:main-1}--\ref{thm:main-4} by establishing the following more general result.

\begin{theorem}\label{main1}
Let $p>3$ be a prime and let $\alpha$ be a $p$-adic integer. Then
\begin{equation}\label{main1eq}
\sum_{k=0}^{p-1}(-1)^k(2k+\alpha)\f{(\alpha)_k^3}{(1)_k^3}\eq (-1)^{\langle -\alpha\rangle_p}(\alpha+\langle -\alpha\rangle_p)+(\alpha+\langle -\alpha\rangle_p)^3E_{p-3}(\alpha)\pmod{p^4},
\end{equation}
and
\begin{equation}\label{main1eq'}
\sum_{k=0}^{\langle -\alpha\rangle_p}(-1)^k(2k+\alpha)\f{(\alpha)_k^3}{(1)_k^3}\eq (-1)^{\langle -\alpha\rangle_p}(\alpha+\langle -\alpha\rangle_p)+(\alpha+\langle -\alpha\rangle_p)^3E_{p-3}(\alpha)\pmod{p^4}.
\end{equation}
\end{theorem}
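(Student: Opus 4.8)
The plan is to use the Wilf--Zeilberger method with a suitable WZ pair, following the strategy of Zudilin and Sun but with an extra free parameter $\alpha$ so that the conclusion becomes a polynomial identity in $\alpha$ that we then specialize. Concretely, I would introduce a function $F(n,k)$ built from the summand $(-1)^k(2k+\alpha)\f{(\alpha)_k^3}{(1)_k^3}$ together with an auxiliary parameter, and find its WZ-companion $G(n,k)$ so that $F(n+1,k)-F(n,k)=G(n,k+1)-G(n,k)$. Summing this telescoping relation over $k$ from $0$ to $p-1$ (or to $\langle-\alpha\rangle_p$) and over the auxiliary index turns the left-hand side of \eqref{main1eq} into a sum of certificate terms $G$, each of which carries an explicit product of Pochhammer symbols whose $p$-adic valuation is easy to track.

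**The key steps, in order.** First I would record the elementary facts about $(\alpha)_k/(1)_k$ needed throughout: writing $\alpha+\langle-\alpha\rangle_p=:p t_\alpha$ with $t_\alpha$ a $p$-adic integer (this is possible exactly because $\langle-\alpha\rangle_p\equiv-\alpha\pmod p$), one has that $(\alpha)_k/(1)_k$ is a $p$-adic unit times a small power of $p$ depending on whether $k\le\langle-\alpha\rangle_p$ or $k>\langle-\alpha\rangle_p$; in particular the terms with $\langle-\alpha\rangle_p<k\le p-1$ contribute $0$ modulo a high power of $p$, which is what makes \eqref{main1eq} and \eqref{main1eq'} agree. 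Second, I would exhibit the WZ pair explicitly and verify the WZ relation by the standard rational-function (Gosper/Zeilberger certificate) check — this is a finite computation. Third, I would sum the WZ relation and isolate the ``diagonal'' contribution: the telescoped sum collapses to boundary terms, and the surviving piece is a single hypergeometric-type sum in the auxiliary variable, which I would evaluate in closed form (a Dixon- or Watson-type $_3F_2$ evaluation, or a known finite identity) to produce the main term $(-1)^{\langle-\alpha\rangle_p}(\alpha+\langle-\alpha\rangle_p)$ exactly, i.e. modulo $p^\infty$. Fourth, and this is where the Euler polynomial enters, I would expand the residual error term to one further order in $p$: the $p^3$-coefficient will be an expression of the shape $(\alpha+\langle-\alpha\rangle_p)^3$ times a harmonic-type sum $\sum_{j=1}^{(p-1)/\bullet}(\ldots)$, and I would identify that sum, modulo $p$, with $E_{p-3}(\alpha)$ using the classical congruence $E_{p-3}(x)\equiv -2\sum_{j=0}^{p-1}\frac{(-1)^j j^{p-2}}{\,\cdots\,}$ type formula (equivalently, $\sum_{j=1}^{(p-1)/2}\frac{1}{(2j-1)^2}\equiv$ a multiple of $E_{p-3}$, suitably generalized with the shift $x=\alpha$).

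**The main obstacle** will be the fourth step: controlling the error term to modulus $p^4$ and recognizing the resulting weighted sum as $E_{p-3}(\alpha)$. Getting the main term exactly (Steps 1--3) is essentially the same $p$-adic bookkeeping Zudilin and Swisher used for \eqref{eq:b2}, \eqref{eq:e2}, \eqref{eq:f2}; but pushing to $p^4$ requires a second-order Taylor expansion of the Pochhammer quotients around the ``resonant'' index, and the cross-terms produce sums like $\sum \frac{1}{(k+\alpha)^2}$ and $\sum\frac1{(k+\alpha)(k'+\alpha)}$ over a restricted range. The technical heart is to show these combine so that only the Euler-polynomial contribution survives modulo $p$; I expect to need a lemma expressing $\sum_{k=0}^{\langle-\alpha\rangle_p}\big(\text{the relevant weight}\big)\equiv c\,E_{p-3}(\alpha)\pmod p$ for the explicit constant $c$ that yields the stated coefficients $\tfrac19,\tfrac1{16},\tfrac89,\tfrac{27}{16}$ after substituting $\alpha=\tfrac13,\tfrac14$ with the appropriate value of $\langle-\alpha\rangle_p$ (namely $\tfrac{p-1}{3}$ or $\tfrac{2p-1}{3}$, and $\tfrac{p-1}{4}$ or $\tfrac{3p-1}{4}$). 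Once Theorem~\ref{main1} is in hand, Theorems~\ref{thm:main-1}--\ref{thm:main-4} follow by plugging in these four values of $\alpha$ and simplifying $(\alpha+\langle-\alpha\rangle_p)^3$.
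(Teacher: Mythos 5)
Your outline follows the same broad strategy as the paper (a WZ pair carrying the parameter $\alpha$, telescoping over a $p\times p$ range, then a $p$-adic expansion of the boundary terms, with the Euler polynomial coming from an alternating power sum), but it has genuine gaps exactly where the work lies. The most serious one is your third step. After telescoping the pair $F(n,k)=(-1)^{n+k}(2n+\alpha)(\alpha)_n^2(\alpha)_{n+k}/\bigl((1)_n^2(1)_{n-k}(\alpha)_k^2\bigr)$, $G(n,k)=(-1)^{n+k}(\alpha)_n^2(\alpha)_{n+k-1}/\bigl((1)_{n-1}^2(1)_{n-k}(\alpha)_k^2\bigr)$ (no extra auxiliary parameter is needed), what remains is $\f{(\alpha)_{2p-1}}{(1)_{p-1}^2}-\f{(\alpha)_p^2}{(1)_{p-1}^2}\sum_{k=1}^{p-1}(-1)^k\f{(\alpha)_{p+k-1}}{(1)_{p-k}(\alpha)_k^2}$, and this certificate sum has no Dixon/Watson-type closed evaluation giving the main term ``exactly, modulo $p^\infty$''; the whole content of the proof is a $p$-adic analysis of it to order $p^4$. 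One must split the sum at $k=\langle-\alpha\rangle_p=a$ and $k=a+1$ (where the valuation of the summand changes), expand the Pochhammer quotients to second order in $p$, use Lehmer's congruences and finite identities such as $\sum_{k=1}^n(-1)^k/\bigl(k^2\binom{n}{k}\bigr)=H_n^{(2)}+2\sum_{k=1}^n(-1)^k/k^2$, and verify that every $H_a$, $H_a^2$, $H_a^{(2)}$ term cancels so that only $(-1)^a pt+(-1)^a2p^3t^3\sum_{k=1}^a(-1)^k/k^2$ survives; only then does $1/k^2\eq k^{p-3}$, the identity $\sum_{k=1}^n(-1)^kk^m=\f{(-1)^n}{2}\bigl(E_m(n+1)+(-1)^nE_m(0)\bigr)$ and $E_{p-3}(1-\alpha)=E_{p-3}(\alpha)$ produce the stated coefficient $(\alpha+\langle-\alpha\rangle_p)^3E_{p-3}(\alpha)$. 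This cancellation, and the separate treatment of the degenerate cases $a=0$ and $a=p-1$ (the generic expansion has $a+1$ in denominators), are exactly what you defer as the ``main obstacle,'' so the $p^3$ term is asserted rather than derived.

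The second gap concerns the equivalence of \eqref{main1eq} and \eqref{main1eq'}. Your step 1 claims the terms with $\langle-\alpha\rangle_p<k\le p-1$ ``contribute $0$ modulo a high power of $p$,'' but each such term is divisible only by $p^3$ (the cube of the single factor $\alpha+\langle-\alpha\rangle_p=pt$ inside $(\alpha)_k$), so valuation alone does not make the two congruences agree modulo $p^4$. One must prove $\sum_{k=a+1}^{p-1}(-1)^k(2k+\alpha)(\alpha)_k^3/(1)_k^3\eq0\pmod{p^4}$, which requires a cancellation modulo $p$: shift the index, pull out the factor $(\alpha)_{a+1}^3/(1)_{a+1}^3\eq0\pmod{p^3}$, and show the remaining sum $\sum_{k=0}^{p-a-2}(2k+a+2)\binom{p-a-2}{k}^{-3}$ vanishes mod $p$ by the reflection $k\mapsto p-a-2-k$. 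Nothing in your outline supplies this argument, so \eqref{main1eq'} (and hence the upper limits $M=(p-1)/3$, $(p-1)/4$, $(2p-1)/3$, $(3p-1)/4$ in Theorems \ref{thm:main-1}--\ref{thm:main-4}) is not established by the proposal as written.
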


Note that the case $\alpha=-1/2$ of \eqref{main1eq'} was proved by the first author and Liu \cite[Theorem 1.1]{GuoLiu}. The modulus $p^3$ case of Theorem \ref{main1} was obtained by the second author and Sun \cite[Theorem 1.3]{WangSun}.
Moreover, $q$-analogues of this result modulo $p^3$ for $\alpha=r/d$ being a rational $p$-adic integer and $p\equiv \pm r\pmod{d}$
were given by the first author \cite[Theorems 1.5 and 1.6]{Guo} and the first author and Zudilin \cite[Theorems 4.9 and 4.10]{GuoZu}.

The paper is organized as follows. In the next section, we give some preliminary results. The proof of Theorem \ref{main1}
will be given in Section 3. Finally, in Section 4, we present an open problem on $q$-congruences for further study.

\section{Preliminary Results}\label{pre}

We first recall some basic properties of the harmonic numbers $H_n^{(m)}=\sum_{k=1}^n1/k^m$ and the Euler polynomials.

\begin{lemma}[Lehmer \cite{Lehmer}]\label{lehmercon}
For any prime $p>3$,
$$
H_{p-1}\eq0\pmod{p^2}\ \ \t{and}\ \ H_{p-1}^{(2)}\eq H_{(p-1)/2}^{(2)}\eq0\pmod{p}.
$$
\end{lemma}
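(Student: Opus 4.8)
The plan is to prove the three congruences in an order that lets the second-order sums feed the sharper modulus-$p^2$ statement for $H_{p-1}$. Throughout I note that each of $H_{p-1}$, $H_{p-1}^{(2)}$, $H_{(p-1)/2}^{(2)}$ is a rational number whose denominator is coprime to $p$, hence a well-defined element of $\Z_{(p)}$, so that congruences modulo $p$ and $p^2$ are meaningful and the reciprocals $1/k$ may be manipulated as $p$-adic integers.

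I would begin with $H_{p-1}^{(2)}\eq0\pmod p$, which is the cleanest. Since $p$ is prime, inversion $k\mapsto k^{-1}\pmod p$ permutes $\{1,2,\cs,p-1\}$; as $\f1{k^2}\eq(k^{-1})^2\pmod p$, this yields $H_{p-1}^{(2)}=\sum_{k=1}^{p-1}\f1{k^2}\eq\sum_{j=1}^{p-1}j^2\pmod p$. The last sum equals $\f{(p-1)p(2p-1)}{6}$, which is divisible by $p$ as soon as $p\nmid 6$, and this is the single place where the hypothesis $p>3$ is used. Hence $H_{p-1}^{(2)}\eq0\pmod p$.

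Next I would pass to $H_{(p-1)/2}^{(2)}\eq0\pmod p$ by the symmetric pairing $k\Lr p-k$. Because $(p-k)^2\eq k^2\pmod p$, the two halves of $H_{p-1}^{(2)}=\sum_{k=1}^{(p-1)/2}\l(\f1{k^2}+\f1{(p-k)^2}\r)$ agree modulo $p$, so $H_{p-1}^{(2)}\eq 2H_{(p-1)/2}^{(2)}\pmod p$. As $p$ is odd, $2$ is invertible modulo $p$, and the preceding step forces $H_{(p-1)/2}^{(2)}\eq0\pmod p$. Finally, for $H_{p-1}\eq0\pmod{p^2}$ I would pair $k$ with $p-k$ while keeping the exact numerator: $\f1k+\f1{p-k}=\f{p}{k(p-k)}$, whence $H_{p-1}=p\sum_{k=1}^{(p-1)/2}\f1{k(p-k)}$. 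Modulo $p^2$ it suffices to reduce the remaining sum modulo $p$, where $k(p-k)\eq-k^2$; thus $H_{p-1}\eq-p\,H_{(p-1)/2}^{(2)}\pmod{p^2}$, and the mod $p$ vanishing just obtained upgrades this to $H_{p-1}\eq0\pmod{p^2}$.

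The one genuinely delicate point — and the reason this is an honest refinement rather than a triviality — is that the modulus-$p^2$ statement for the first-order sum is not self-contained: it is reached only by factoring out a $p$ and invoking the mod $p$ vanishing of a second-order sum. No step is computationally heavy, so the only real care needed is bookkeeping of which modulus each identity is valid to, and verifying that $p>3$ (rather than merely $p$ odd) is invoked exactly once, namely to divide by $6$.
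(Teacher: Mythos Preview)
Your argument is correct: the three congruences are handled by the standard pairing $k\leftrightarrow p-k$ together with the bijection $k\mapsto k^{-1}$ on $\{1,\dots,p-1\}$, and the logical dependencies are arranged properly so that the modulus-$p^2$ conclusion for $H_{p-1}$ (Wolstenholme) follows from the mod $p$ vanishing of $H_{(p-1)/2}^{(2)}$. The use of $p>3$ is identified correctly.

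There is no proof in the paper to compare against: the lemma is quoted from Lehmer \cite{Lehmer} without proof, as a preliminary fact. Your write-up supplies exactly the classical elementary proof one would expect for this result, so it is consistent with (indeed more detailed than) what the paper offers.
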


\begin{lemma}[\hspace{-0.1mm}\cite{MOS}]\label{Euler}
Let $x\in\C$ and $n,m\in\N$. Then
\begin{align*}
E_{2n}(0)&=E_{2n}(1)=0,\\
E_n(1-x)&=(-1)^nE_n(x),\\
\sum_{k=1}^n(-1)^kk^m&=\f{(-1)^n}{2}\l(E_m(n+1)+(-1)^nE_m(0)\r).
\end{align*}
\end{lemma}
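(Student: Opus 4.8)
The plan is to derive all three identities directly from the generating function $\sum_{n\gs0}E_n(x)t^n/n!=2e^{xt}/(e^t+1)$ recorded in the introduction, by first extracting two ``master'' relations and then reading the stated formulas off as identities of formal power series in $t$. The first master relation is the reflection formula $E_n(1-x)=(-1)^nE_n(x)$ (the second assertion of the lemma). To prove it I would substitute $t\mapsto-t$ into the defining series: its right-hand side becomes $2e^{-xt}/(e^{-t}+1)$, and multiplying numerator and denominator by $e^t$ turns this into $2e^{(1-x)t}/(e^t+1)$, which is exactly the generating function of $E_n(1-x)$. Since the same substitution sends the left-hand side to $\sum_{n\gs0}(-1)^nE_n(x)t^n/n!$, matching the coefficients of $t^n/n!$ gives $E_n(1-x)=(-1)^nE_n(x)$. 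The second master relation is the difference equation $E_n(x+1)+E_n(x)=2x^n$, which follows by adding the generating functions at $x+1$ and at $x$: the sum equals $2e^{xt}(e^t+1)/(e^t+1)=2e^{xt}=\sum_{n\gs0}2x^nt^n/n!$, and one again compares coefficients.

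With these two relations in hand the first assertion is immediate. Putting $x=0$ in the difference equation gives $E_n(1)+E_n(0)=2\cdot0^n=0$ for $n\gs1$, while the reflection formula at $x=0$ gives $E_n(1)=(-1)^nE_n(0)$. For even $n$ these read simultaneously $E_n(1)=-E_n(0)$ and $E_n(1)=E_n(0)$, forcing $E_n(0)=E_n(1)=0$; this is $E_{2n}(0)=E_{2n}(1)=0$ for $n\gs1$.

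For the third assertion I would use the difference equation in the form $k^m=\f12(E_m(k+1)+E_m(k))$, so that
$$
\sum_{k=1}^n(-1)^kk^m=\f12\sum_{k=1}^n(-1)^k\l(E_m(k+1)+E_m(k)\r).
$$
Reindexing the $E_m(k+1)$ terms by $j=k+1$ makes the interior contributions cancel in pairs, and the right-hand sum collapses to the boundary terms $(-1)^nE_m(n+1)-E_m(1)$. I would then invoke $E_m(1)=-E_m(0)$ (the $x=0$ case of the difference equation, valid for $m\gs1$) to replace $-E_m(1)$ by $E_m(0)$, and factor out $(-1)^n$ using $(-1)^{-n}=(-1)^n$, which produces exactly $\f{(-1)^n}{2}(E_m(n+1)+(-1)^nE_m(0))$.

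Since every step is elementary, there is no real obstacle; the only points demanding care are the bookkeeping in the telescoping step and the edge cases $n=0$ and $m=0$, where $E_0\equiv1$ and the relations $0^n=0$ and $E_m(1)=-E_m(0)$ fail. This is why the first assertion is understood for $n\gs1$ and the third for $m\gs1$ --- which is all that the intended application requires, where $m=p-3\gs2$ is even and $n$ ranges over positive values.
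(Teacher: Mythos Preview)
Your argument is correct. The two ``master'' relations you extract from the generating function---the reflection formula and the difference equation $E_n(x+1)+E_n(x)=2x^n$---are exactly the standard tools, and your telescoping computation for the third identity is clean and accurate, including the boundary bookkeeping and the substitution $-E_m(1)=E_m(0)$ for $m\gs1$. Your remarks on the degenerate cases $n=0$ and $m=0$ are also appropriate: the lemma as stated tacitly excludes these, and the application in the paper only uses $m=p-3\gs2$.

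As for comparison with the paper: there is nothing to compare. The paper does not prove Lemma~\ref{Euler}; it simply quotes the three formulas from the reference \cite{MOS} (Magnus--Oberhettinger--Soni). Your write-up therefore supplies a self-contained proof where the paper offers only a citation, which is a net gain in expository completeness at negligible cost.
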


In order to prove Theorem \ref{main1}, we need to establish another six lemmas.
For the sake of convenience, we shall always assume that $\alpha+\langle -\alpha\rangle_p=pt$ from now on.

\begin{lemma}\label{a=0}
If $\alpha\eq0\pmod{p}$, then Theorem \ref{main1} holds.
\end{lemma}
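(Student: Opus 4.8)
The plan is to exploit the fact that $\alpha\equiv0\pmod p$ forces $\langle-\alpha\rangle_p=0$, so the standing convention $\alpha+\langle-\alpha\rangle_p=pt$ becomes $\alpha=pt$ and the right-hand sides of both \eqref{main1eq} and \eqref{main1eq'} collapse to $\alpha+\alpha^3E_{p-3}(\alpha)$. Everything then reduces to showing that each of the two truncated sums is congruent to $\alpha+\alpha^3E_{p-3}(\alpha)$ modulo $p^4$, which I would do by directly estimating $p$-adic valuations together with Lemmas \ref{lehmercon} and \ref{Euler}.

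I would dispose of \eqref{main1eq'} first. Its summation index runs only over $k=0$, so the left-hand side is exactly $\alpha$ and the assertion becomes $\alpha^3E_{p-3}(\alpha)\equiv0\pmod{p^4}$. Since $p>3$ the Euler polynomial $E_{p-3}(x)$ has $p$-adic integer coefficients and $E_{p-3}(0)=0$ by Lemma \ref{Euler} (as $p-3$ is even), whence $E_{p-3}(\alpha)\equiv E_{p-3}(0)=0\pmod p$; combined with $v_p(\alpha)\geq1$ this yields $v_p(\alpha^3E_{p-3}(\alpha))\geq4$.

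For \eqref{main1eq} I would peel off the $k=0$ term (again equal to $\alpha$) and factor $\alpha^3$ out of the rest using $(\alpha)_k=\alpha\,(\alpha+1)_{k-1}$, writing
\[
\sum_{k=0}^{p-1}(-1)^k(2k+\alpha)\frac{(\alpha)_k^3}{(1)_k^3}=\alpha+\alpha^3S,\qquad S:=\sum_{k=1}^{p-1}(-1)^k(2k+\alpha)\frac{(\alpha+1)_{k-1}^3}{(1)_k^3},
\]
where $S\in\Z_p$ because $(1)_k=k!$ is a $p$-adic unit for $1\leq k\leq p-1$. As $v_p(\alpha^3)\geq3$, it suffices to prove $S\equiv E_{p-3}(\alpha)\pmod p$. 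Reducing $S$ modulo $p$, where $\alpha\equiv0$ and so $(\alpha+1)_{k-1}\equiv(k-1)!$, gives
\[
S\equiv\sum_{k=1}^{p-1}(-1)^k\cdot2k\cdot\frac1{k^3}=2\sum_{k=1}^{p-1}\frac{(-1)^k}{k^2}=H_{(p-1)/2}^{(2)}-2H_{p-1}^{(2)}\pmod p,
\]
the last equality coming from splitting the alternating sum into its even and odd parts. Both $H_{(p-1)/2}^{(2)}$ and $H_{p-1}^{(2)}$ vanish modulo $p$ by Lemma \ref{lehmercon}, so $S\equiv0\equiv E_{p-3}(\alpha)\pmod p$, which finishes the proof.

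Since this is the degenerate case $\alpha\equiv0$, no real difficulty arises; the only points that need a little attention are the $p$-integrality of $E_{p-3}$ at the $p$-adic integer $\alpha$ (so that $E_{p-3}(\alpha)\equiv E_{p-3}(0)$ is legitimate) and the correct decomposition of $\sum_{k=1}^{p-1}(-1)^k/k^2$ into the harmonic sums to which Lehmer's congruences apply.
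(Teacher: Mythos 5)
Your proposal is correct and follows essentially the same route as the paper: both peel off the $k=0$ term, factor out $\alpha^3=p^3t^3$, reduce the remaining sum modulo $p$ to $2\sum_{k=1}^{p-1}(-1)^k/k^2=H_{(p-1)/2}^{(2)}-2H_{p-1}^{(2)}\equiv0$ via Lemma \ref{lehmercon}, and dispose of the Euler-polynomial term through $E_{p-3}(\alpha)\equiv E_{p-3}(0)=0\pmod p$, with \eqref{main1eq'} being trivial since its sum is just the $k=0$ term. Your extra remark on the $p$-integrality of the coefficients of $E_{p-3}(x)$ is a fine point the paper leaves implicit.
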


\begin{proof}
It is easy to see that
\begin{align*}
\sum_{k=0}^{p-1}(-1)^k(2k+pt)\f{(pt)_k^3}{(1)_k^3}=&\ pt+\sum_{k=1}^{p-1}(-1)^k(2k+pt)\f{(pt)_k^3}{(1)_k^3}\\
\eq&\ pt+2p^3t^3\sum_{k=1}^{p-1}(-1)^{k}k\f{(1+pt)_{k-1}^3}{(1)_{k}^3}\\
\eq&\ pt+2p^3t^3\sum_{k=1}^{p-1}\f{(-1)^k}{k^2}\\
=&\ pt+2p^3t^3\l(\f12H_{(p-1)/2}^{(2)}-H_{p-1}^{(2)}\r)\pmod{p^4}.
\end{align*}
Then, by Lemma \ref{lehmercon},
$$
\sum_{k=0}^{p-1}(-1)^k(2k+pt)\f{(pt)_k^3}{(1)_k^3}\eq pt\pmod{p^4}.
$$
Now, it suffices to show $E_{p-3}(pt)\eq 0\pmod{p}$. In fact, by Lemma \ref{Euler} we have
$$
E_{p-3}(pt)\eq E_{p-3}(0)=0\pmod{p}.
$$
This proves \eqref{main1eq}. Further, the proof of \eqref{main1eq'} is trivial.
\end{proof}

\begin{lemma}\label{wzprod}
For any prime $p>3$ and $\alpha\in\Z_p$ with $\alpha\not\eq0\pmod{p}$, we have the following congruence modulo $p^4$:
$$
\f{(\alpha)_{2p-1}}{(1)_{p-1}^2}\eq\begin{cases}pt,\ &\t{if}\ \ \langle -\alpha\rangle_p=p-1,\\[5pt]
-\dfrac{p^2t(t+1)}{\langle -\alpha\rangle_p+1}\l(1+2pH_{\langle -\alpha\rangle_p}+\dfrac{p(t+2)}{\langle -\alpha\rangle_p+1}\r),\ &\t{otherwise}.
\end{cases}
$$
\end{lemma}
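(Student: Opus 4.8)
The plan is to evaluate the product $(\alpha)_{2p-1}/(1)_{p-1}^2$ directly by splitting the Pochhammer symbol $(\alpha)_{2p-1} = \alpha(\alpha+1)\cdots(\alpha+2p-2)$ at the point where the factors become divisible by $p$. Write $\langle-\alpha\rangle_p = n$, so that $\alpha + n = pt$ with $t$ a $p$-adic integer, and $\alpha + n + p = p(t+1)$. Since $0 \le n \le p-1$ and $\alpha\not\equiv 0\pmod p$, we have $1\le n\le p-1$, and among the $2p-1$ consecutive factors $\alpha, \alpha+1, \dots, \alpha+2p-2$ exactly those with index $n$ and index $n+p$ are divisible by $p$ (the latter only if $n+p \le 2p-2$, i.e. $n \le p-2$). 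So when $n = p-1$ there is a single factor $\alpha+n = pt$ divisible by $p$, and $(\alpha)_{2p-1}$ has $p$-adic valuation exactly $1$ (assuming $t\not\equiv 0$); when $n\le p-2$ there are two such factors, $\alpha+n = pt$ and $\alpha+n+p = p(t+1)$, giving valuation $\ge 2$. In either case I would factor these out and compare the remaining product with $(1)_{p-1}^2 = ((p-1)!)^2$.

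First I would handle the case $n = p-1$. Then $(\alpha)_{2p-1} = (\alpha)_{p-1}\cdot(\alpha+p-1)\cdot(\alpha+p)_{p-1} = (\alpha)_{p-1}\cdot pt\cdot(\alpha+p)_{p-1}$. Now $\alpha \equiv -n = -(p-1) \equiv 1\pmod p$ (more precisely $\alpha = pt - (p-1)$), so $(\alpha)_{p-1} = \alpha(\alpha+1)\cdots(\alpha+p-2) \equiv (p-1)!\pmod{p}$ since the factors run over a complete set of nonzero residues; and similarly $(\alpha+p)_{p-1} \equiv (p-1)!\pmod p$. To get the result modulo $p^4$ after dividing by the single factor $pt$, I need $(\alpha)_{p-1}(\alpha+p)_{p-1} \equiv ((p-1)!)^2 \pmod{p^3}$. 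This follows by writing each factor as $(p-1)! $ times a product of the form $\prod_{j}(1 + p\,u_j/(\text{something}))$ and expanding to second order, using Lemma \ref{lehmercon} to kill the resulting harmonic sums $H_{p-1}$ and $H_{p-1}^{(2)}$ modulo $p^2$ and $p$ respectively. This yields $(\alpha)_{2p-1}/(1)_{p-1}^2 \equiv pt \pmod{p^4}$.

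The main work is the case $n \le p-2$. Here I would write
$$
(\alpha)_{2p-1} = (\alpha)_n \cdot (pt) \cdot (\alpha+n+1)_{p-1} \cdot \big(p(t+1)\big) \cdot (\alpha+n+p+1)_{p-1-n},
$$
pull out the two factors $pt$ and $p(t+1)$, and reduce the three remaining Pochhammer blocks. Since only one factor of $p$ survives in the denominator's partners, I now only need the three blocks evaluated modulo $p^2$ (not $p^3$), because $p^2 t(t+1)$ already carries two powers of $p$. Modulo $p$ the block $(\alpha)_n (\alpha+n+1)_{p-1}(\alpha+n+p+1)_{p-1-n}$ should collapse, after using $\alpha\equiv -n$, to $(-1)^n n!\cdot (p-1-n)!\cdot\text{(stuff)}$ in a way that combines with $((p-1)!)^2$ via Wilson's theorem $(p-1)!\equiv -1$ and the identity $n!\,(p-1-n)! \equiv (-1)^{n+1}\pmod p$; the cleanest bookkeeping is to recognize $(\alpha)_n/n! $ and related ratios as essentially $\binom{-\alpha}{n}$-type quantities and track them to first order in $p$. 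The first-order correction term is where the $H_n = H_{\langle-\alpha\rangle_p}$ and the fractions $1/(n+1)$, $(t+2)/(n+1)^2$ in the statement come from: expanding $(\alpha+n+1)_{p-1} = (p(t-1)+ \cdots)$-type products and the factor coming from $\alpha+n+p+1$ to first order in $p$ produces exactly a term $2pH_n$ (from the two "long" blocks of length $p-1$ and $p-1-n$, whose leading harmonic pieces add) plus a term $p(t+2)/(n+1)$ (from the short block $(\alpha)_n$ and the juncture factors near $\alpha+n$ and $\alpha+n+p$), and the overall prefactor $-1/(n+1)$ comes from the missing factor when one regroups $(\alpha+n+1)_{p-1}$ against $(p-1)!$.

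The hard part will be getting the first-order-in-$p$ coefficients exactly right: one must carefully account for which harmonic-number pieces appear (and at what length), use Wilson's theorem and the reflection $n!(p-1-n)!\equiv(-1)^{n+1}$ to simplify the leading constant, and verify that the cross terms assemble into the compact form $1 + 2pH_n + p(t+2)/(n+1)$. I expect the sign $(-1)^{\langle-\alpha\rangle_p}$ that appears in Theorem \ref{main1} to cancel against $(-1)^n$ factors here so that none survives in this lemma. Throughout, Lemma \ref{lehmercon} (to discard $H_{p-1}, H_{p-1}^{(2)}$) and Wilson's theorem are the only external inputs needed; everything else is a finite, if delicate, expansion.
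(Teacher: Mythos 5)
Your overall strategy coincides with the paper's: split $(\alpha)_{2p-1}$ at the one or two factors divisible by $p$ (namely $\alpha+n=pt$ and, when $n\le p-2$, $\alpha+n+p=p(t+1)$, with $n=\langle-\alpha\rangle_p$), and expand the remaining Pochhammer blocks against $(1)_{p-1}^2$ using $(1+u)_k\equiv(1)_k\bigl(1+uH_k+\tfrac{u^2}{2}(H_k^2-H_k^{(2)})\bigr)$ and Lehmer's congruences. Your treatment of the case $n=p-1$ is exactly the paper's argument and is fine. In the main case, however, the lemma's entire content is the first-order data $1+2pH_n+\tfrac{p(t+2)}{n+1}$, and that is precisely what you defer (``the hard part will be getting the first-order-in-$p$ coefficients exactly right''), so the proof of the displayed congruence is not actually there. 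Moreover, the sketch as written contains concrete errors. First, an off-by-one: your decomposition $(\alpha)_n\cdot pt\cdot(\alpha+n+1)_{p-1}\cdot p(t+1)\cdot(\alpha+n+p+1)_{p-1-n}$ has $2p$ factors; the tail block must be $(\alpha+n+p+1)_{p-2-n}$. Second, your attribution of the correction terms is wrong: the length-$(p-1)$ block is $(1+pt)_{p-1}$, which contributes \emph{nothing} modulo $p^2$ relative to $(1)_{p-1}$ because $H_{p-1}\equiv0\pmod{p^2}$, so the $2pH_n$ does not come from ``the two long blocks''. It assembles from three other sources: $(\alpha)_n=(-n+pt)_n\equiv(-n)_n(1-ptH_n)$, the tail block via the reflection $H_{p-2-n}\equiv H_{n+1}\pmod p$, and the comparison of $n!\,(p-2-n)!$ with $(p-1)!$, which the paper handles through the exact identity $(-n)_n(1)_{p-2-n}/(1)_{p-1}=-(1)_n/(1-p)_{n+1}$ followed by $(1-p)_{n+1}\equiv(1)_{n+1}(1-pH_{n+1})\pmod{p^2}$; that last step is also where the prefactor $-1/(n+1)$ and the term $p(t+2)/(n+1)$ originate, after writing $H_{n+1}=H_n+\tfrac1{n+1}$.

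Relatedly, Wilson's theorem and the mod-$p$ reflection $n!\,(p-1-n)!\equiv(-1)^{n+1}$ cannot carry the argument: after extracting $p^2t(t+1)$ you still need the residual quotient to first order in $p$ (i.e.\ modulo $p^2$), so mod-$p$ identities for the ``leading constant'' lose exactly the $pH_{n+1}/(n+1)$-type terms that produce $2pH_n$ and $p(t+2)/(n+1)$. If you replace those heuristics by the exact ratio identity above and the congruence $H_{p-2-n}\equiv H_{n+1}\pmod p$, and fix the block length, your plan becomes the paper's proof.
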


\begin{proof}
If $\langle -\alpha\rangle_p=p-1$, then
\begin{align*}
\f{(\alpha)_{2p-1}}{(1)_{p-1}^2}&=\f{(1+p(t-1))_{2p-1}}{(1)_{p-1}^2}=\f{pt(1+p(t-1))_{p-1}(1+pt)_{p-1}}{(1)_{p-1}^2}\\
&\eq\  pt\l(1+p(t-1)H_{p-1}+\f{p^2(t-1)^2}2(H_{p-1}^2-H_{p-1}^{(2)})\r) \\
&\quad\times\l(1+ptH_{p-1}+\f{p^2t^2}2(H_{p-1}^2-H_{p-1}^{(2)})\r)\\
&\eq\ pt\pmod{p^4},
\end{align*}
where we have used Lemma \ref{lehmercon}.

Below, we assume that $1\ls \langle -\alpha\rangle_p\ls p-2$ and let $a=\langle -\alpha\rangle_p$. Since
$$
H_{p-2-a}=\sum_{j=1}^{p-2-a}\f{1}{j}=\sum_{j=a+2}^{p-1}\f{1}{p-j}\eq -(H_{p-1}-H_{a+1})=H_{a+1}\pmod{p},
$$
we have
\begin{align*}
\f{(\alpha)_{2p-1}}{(1)_{p-1}^2}&=\f{(-a+pt)_{2p-1}}{(1)_{p-1}^2}=\f{p^2t(t+1)(-a+pt)_a(1+pt)_{p-1}(1+p(t+1))_{p-2-a}}{(1)_{p-1}^2}\\
&\eq\  p^2t(t+1)\f{(-a)_a(1)_{p-2-a}(1-ptH_a+p(t+1)H_{p-2-a})}{(1)_{p-1}}\\
&= -p^2t(t+1)\f{(1)_a(1-ptH_a+p(t+1)H_{p-2-a})}{(1-p)_{a+1}}\\
&\eq -\f{p^2t(t+1)(1-ptH_a+p(t+1)H_{p-2-a}+pH_{a+1})}{a+1}\\
&\eq -\f{p^2t(t+1)}{a+1}\l(1+2pH_a+\f{p(t+2)}{a+1}\r)\pmod{p^4}.
\end{align*}
This concludes the proof.
\end{proof}

\begin{lemma}\label{identities}
For any positive integer $n$, we have the following identities:
\begin{align}
\label{Sigma1lem}\sum_{k=1}^n\f{(-1)^k}{k^2\binom{n}{k}}&=H_n^{(2)}+2\sum_{k=1}^n\f{(-1)^k}{k^2},\\
\label{id1}\sum_{k=1}^n\f{(-1)^k}{k}\binom{n}{k}&=-H_n,\\
\label{id2}\sum_{k=1}^n\f{(-1)^k}{k^2}\binom{n}{k}&=-\f12(H_n^{(2)}+H_n^2),\\
\label{id3}\sum_{k=1}^n\f{(-1)^k}{k}\binom{n}{k}H_k&=-H_n^{(2)}.
\end{align}
\end{lemma}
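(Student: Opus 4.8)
The plan is to handle the three identities \eqref{id1}--\eqref{id3}, in which $\binom nk$ sits in the numerator, by a single ``Pascal recursion'' device, and then to treat \eqref{Sigma1lem}, where $\binom nk$ appears in the denominator, by reducing it through a short chain of auxiliary sums.

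For \eqref{id1} I would write $S_n$ for the left-hand side, split $\binom nk=\binom{n-1}k+\binom{n-1}{k-1}$, and use $\frac1k\binom{n-1}{k-1}=\frac1n\binom nk$ together with the binomial theorem to get $S_n=S_{n-1}-\frac1n$; since $S_1=-1$ this gives $S_n=-H_n$. (Alternatively, integrate $\sum_{k=1}^n(-1)^k\binom nk t^{k-1}=t^{-1}((1-t)^n-1)$ over $t\in(0,1)$.) Applying the same splitting to $\sum_{k=1}^n\frac{(-1)^k}{k^2}\binom nk$ reduces it to $T_{n-1}+\frac1n\sum_{k=1}^n\frac{(-1)^k}k\binom nk=T_{n-1}-\frac{H_n}n$ by \eqref{id1}, hence $T_n=-\sum_{j=1}^n\frac{H_j}j$; combined with the classical symmetrization $\sum_{j=1}^n\frac{H_j}j=\sum_{1\le i\le j\le n}\frac1{ij}=\frac12(H_n^2+H_n^{(2)})$ this proves \eqref{id2}. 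For \eqref{id3}, the same Pascal step turns $U_n:=\sum_{k=1}^n\frac{(-1)^k}k\binom nkH_k$ into $U_{n-1}+\frac1n\sum_{k=1}^n(-1)^k\binom nkH_k$; swapping the order of summation and using the (telescoping) partial alternating sum $\sum_{k=j}^n(-1)^k\binom nk=(-1)^j\binom{n-1}{j-1}$ evaluates $\sum_{k=1}^n(-1)^k\binom nkH_k=\sum_{j=1}^n\frac{(-1)^j}j\binom{n-1}{j-1}=\frac1n\sum_{j=1}^n(-1)^j\binom nj=-\frac1n$, so $U_n=U_{n-1}-\frac1{n^2}$ and $U_n=-H_n^{(2)}$.

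For \eqref{Sigma1lem} I would set $V_n:=\sum_{k=1}^n\frac{(-1)^k}{k^2\binom nk}$ and $W_m:=\sum_{k=1}^m\frac{(-1)^k}{k\binom mk}$. From $\frac1{\binom nk}-\frac1{\binom{n-1}k}=-\frac kn\cdot\frac1{\binom{n-1}k}$ (valid for $k\le n-1$), after peeling off the top term $\frac{(-1)^n}{n^2}$, one gets $V_n-V_{n-1}=\frac{(-1)^n}{n^2}-\frac1nW_{n-1}$, and the identical computation gives $W_m-W_{m-1}=\frac{(-1)^m}m-\frac1m\sum_{k=1}^{m-1}\frac{(-1)^k}{\binom{m-1}k}$. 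Everything thus comes down to evaluating $X_\ell:=\sum_{k=0}^\ell\frac{(-1)^k}{\binom\ell k}$, for which I would use the reciprocal-binomial recursion $\frac1{\binom\ell k}=\frac{\ell+1}{\ell+2}\big(\frac1{\binom{\ell+1}k}+\frac1{\binom{\ell+1}{k+1}}\big)$: substituting it into $X_\ell$ and reindexing the second sum, the $X_{\ell+1}$-contributions cancel and one is left with $X_\ell=\frac{\ell+1}{\ell+2}(1+(-1)^\ell)$. Feeding this back yields $W_m=\frac{(-1)^m-1}{m+1}$, then $V_n-V_{n-1}=\frac{2(-1)^n+1}{n^2}$, and telescoping from $V_0=0$ gives $V_n=H_n^{(2)}+2\sum_{k=1}^n\frac{(-1)^k}{k^2}$, which is \eqref{Sigma1lem}.

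The Pascal recursions and the symmetrization of $\sum_jH_j/j$ are routine; the genuinely new ingredient is needed only for \eqref{Sigma1lem}, where the binomial coefficient is in the denominator and one must invoke the reciprocal-binomial recursion together with the chain $X_\ell\Rightarrow W_m\Rightarrow V_n$. I expect the only subtlety there to be careful bookkeeping of boundary and low-index terms (e.g.\ $\binom{n-1}n=0$, and the $k=0$ and $k=\ell+1$ terms that appear when manipulating $X_{\ell+1}$).
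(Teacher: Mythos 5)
Your proof is correct, but it takes a genuinely different route from the paper: the authors do not actually derive these identities at all — they remark that the identities ``should be known,'' cite Gould for \eqref{id1}, Van Hamme for a $q$-analogue, and Dilcher for \eqref{id2}, and note that all four can be found and certified automatically by the \texttt{Sigma} package. You instead give a self-contained elementary derivation: Pascal's rule plus the absorption identity $\frac1k\binom{n-1}{k-1}=\frac1n\binom nk$ telescopes \eqref{id1}, \eqref{id2} (via $\sum_j H_j/j=\tfrac12(H_n^2+H_n^{(2)})$) and \eqref{id3} (via the partial alternating sum $\sum_{k\ge j}(-1)^k\binom nk=(-1)^j\binom{n-1}{j-1}$), and for \eqref{Sigma1lem} you push the recursion chain $X_\ell\Rightarrow W_m\Rightarrow V_n$ through the reciprocal-binomial identities $\frac1{\binom nk}-\frac1{\binom{n-1}k}=-\frac kn\frac1{\binom{n-1}k}$ and $\frac1{\binom\ell k}=\frac{\ell+1}{\ell+2}\bigl(\frac1{\binom{\ell+1}k}+\frac1{\binom{\ell+1}{k+1}}\bigr)$; I checked the resulting closed forms $X_\ell=\frac{\ell+1}{\ell+2}(1+(-1)^\ell)$, $W_m=\frac{(-1)^m-1}{m+1}$, and $V_n-V_{n-1}=\frac{2(-1)^n+1}{n^2}$, and they are all right, including the boundary terms you flag. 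What your approach buys is a fully human-verifiable proof with no reliance on the literature or computer algebra; what the paper's approach buys is brevity, since the lemma is auxiliary and the identities are indeed classical.
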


\begin{proof}
We think these identities should be known. For example, the identity \eqref{id1} was recorded in \cite{Gould}.
A $q$-analogue of \eqref{id1} was given by Van Hamme \cite{Hamme2}. The identity \eqref{id2} is a special case of a result due to Dilcher \cite{Dilcher}.
Of course, all of these identities can be automatically found and proved by using the package \verb"Sigma" in Mathematica (cf. \cite{S}).
\end{proof}

\begin{lemma}\label{Sigma1}
For any prime $p$ and $\alpha\in\Z_p$ with $\alpha\not\eq0\pmod{p}$, we have
$$
\f{(\alpha)_p^2}{(1)_{p-1}^2}\sum_{k=1}^{\langle -\alpha\rangle_p}(-1)^k\f{(\alpha)_{p+k-1}}{(1)_{p-k}(\alpha)_k^2}
\eq (-1)^{\langle -\alpha\rangle_p+1}p^3t^3\l(H_{\langle -\alpha\rangle_p}^{(2)}+2\sum_{k=1}^{\langle -\alpha\rangle_p}\f{(-1)^k}{k^2}\r)\pmod{p^4}.
$$
\end{lemma}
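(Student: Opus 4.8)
The plan is to analyze the product $\f{(\alpha)_p^2}{(1)_{p-1}^2}$ and the summand $\f{(\alpha)_{p+k-1}}{(1)_{p-k}(\alpha)_k^2}$ separately modulo appropriate powers of $p$, and then combine. Write $a=\langle -\alpha\rangle_p$, so $\alpha=pt-a$. First I would observe that $(\alpha)_p=\alpha(\alpha+1)\cdots(\alpha+p-1)$ contains exactly one factor divisible by $p$, namely $\alpha+a=pt$; all the other factors $\alpha+j$ with $0\le j\le p-1$, $j\ne a$, run over a complete set of nonzero residues mod $p$, so $(\alpha)_p\equiv pt\cdot\prod_{j\ne a}(j-a)=pt\cdot(-1)^{a}\,a!\,(p-1-a)!\cdot(\text{unit})$ after reindexing; more precisely $(\alpha)_p/(pt)=(\alpha)_a\,(1+pt)_{p-1-a}$. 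Hence $\f{(\alpha)_p^2}{(1)_{p-1}^2}=p^2t^2\cdot\f{(\alpha)_a^2(1+pt)_{p-1-a}^2}{(1)_{p-1}^2}$, which is $\equiv p^2\times(\text{unit})$; since it is multiplied by a sum each of whose terms is divisible by $p$ (the $(\alpha)_{p+k-1}$ in the numerator contains the factor $\alpha+a=pt$ for $1\le k\le a$), the whole left side is $\equiv p^3\times(\text{something})$, and we only need the unit part of $\f{(\alpha)_p^2}{(1)_{p-1}^2}$ modulo $p$ and the sum modulo $p$. Modulo $p$ the unit part of $\f{(\alpha)_p^2}{p^2t^2(1)_{p-1}^2}$ simplifies using $(\alpha)_a\equiv(-a)_a=(-1)^a a!$ and $(1+pt)_{p-1-a}\equiv(p-1-a)!$ and $(1)_{p-1}=(p-1)!$, together with the standard fact $a!\,(p-1-a)!\equiv(-1)^{a+1}(p-1)!\pmod p$ (Wilson), giving unit part $\equiv 1\pmod p$.

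Next I would handle the summand. For $1\le k\le a$ we have $(\alpha)_{p+k-1}=(\alpha)_a\cdot(pt)\cdot(1+pt)_{k-1}\cdot(1+pt)_{p-1-a}$, so
$$
\f{(\alpha)_{p+k-1}}{(1)_{p-k}(\alpha)_k^2}=\f{pt\,(\alpha)_a(1+pt)_{k-1}(1+pt)_{p-1-a}}{(p-k)!\,(\alpha)_k^2}.
$$
Combining with the prefactor $p^2t^2\f{(\alpha)_a^2(1+pt)_{p-1-a}^2}{(1)_{p-1}^2}$ produces an overall $p^3t^3$ and a unit part which, reduced modulo $p$, should collapse to $\f{(-1)^k}{k^2}$ times a $k$-independent constant: indeed mod $p$, $(\alpha)_a\equiv(-1)^a a!$, $(1+pt)_{k-1}\equiv(k-1)!$, $(1+pt)_{p-1-a}\equiv(p-1-a)!$, $(\alpha)_k\equiv(-a)(-a+1)\cdots(-a+k-1)=(-1)^k\f{a!}{(a-k)!}$, and $(p-k)!\equiv(-1)^{k-1}\f{(p-1)!}{(k-1)!}$ by Wilson again. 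Plugging all of this in, the $a$-dependence cancels against the prefactor's and one is left with a sum proportional to $\sum_{k=1}^{a}\f{(-1)^k}{k^2}$. But the claimed right-hand side involves $H_a^{(2)}+2\sum_{k=1}^a\f{(-1)^k}{k^2}$, not $\sum_{k=1}^a\f{(-1)^k}{k^2}$; this is exactly where identity \eqref{Sigma1lem} of Lemma~\ref{identities} enters. More carefully, the mod-$p$ reduction of the summand's unit part will naturally produce $\f{(-1)^k}{k^2\binom{a}{k}}$ rather than $\f{(-1)^k}{k^2}$, because the factor $\f{(k-1)!\,(a-k)!}{a!}=\f{1}{k\binom{a}{k}}$ appears; so after summing we get $\sum_{k=1}^a\f{(-1)^k}{k^2\binom{a}{k}}$, and \eqref{Sigma1lem} rewrites this precisely as $H_a^{(2)}+2\sum_{k=1}^a\f{(-1)^k}{k^2}$.

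The sign $(-1)^{a+1}$ is then assembled from the Wilson-congruence sign in the prefactor's unit part ($(-1)^{a+1}$ from $a!(p-1-a)!\equiv(-1)^{a+1}(p-1)!$) and whatever constant comes out of the summand reduction; I would track these carefully and check against the $p^3$-modulus case in \cite{WangSun} for consistency. I expect the main obstacle to be bookkeeping: organizing the cancellation of all the $a$-dependent factorial factors between the prefactor and the summand so that the result genuinely depends only on $\f{(-1)^k}{k^2\binom{a}{k}}$ modulo $p$, and verifying that no factor of $p$ is lost or gained in the process (so that the final congruence is genuinely mod $p^4$, i.e. the unit parts are only needed mod $p$). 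A clean way to control this is to factor the left side as $p^3t^3$ times an explicit product that is manifestly a $p$-adic unit times $\sum_{k=1}^{a}\f{(-1)^k}{k^2\binom{a}{k}}+O(p)$, then reduce that unit mod $p$ to $(-1)^{a+1}$, and finally invoke \eqref{Sigma1lem}. The edge case $a=p-1$ should be treated separately (or absorbed) since then $p-1-a=0$ and several of the $(1+pt)_{p-1-a}$ factors are empty, but the formula in the statement makes sense there as well and the computation only simplifies.
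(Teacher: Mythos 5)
Your plan is the same as the paper's: pull out $p^3t^3$, note that the prefactor carries $p^2$ and each summand carries $p$ so that only mod-$p$ information on the unit parts is needed, reduce the sum to $\sum_{k=1}^{a}\f{(-1)^k}{k^2\binom{a}{k}}$ up to the sign $(-1)^{a+1}$, and finish with \eqref{Sigma1lem}. However, the execution as written has a concrete error at the central step. Your factorization $(\alpha)_{p+k-1}=(\alpha)_a\cdot pt\cdot(1+pt)_{k-1}\cdot(1+pt)_{p-1-a}$ is wrong: the block of factors beyond $\alpha+p-1$ is $(\alpha+p)_{k-1}=(p(t+1)-a)_{k-1}$, which modulo $p$ is $(-a)_{k-1}=(-1)^{k-1}\,a!/(a-k+1)!$, not $(1+pt)_{k-1}\equiv(k-1)!$. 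If you carry your reduction through literally, the unit part of the $k$-th summand comes out as $(-1)^{k-1}/\bigl(k^2\binom{a}{k}^2\bigr)$, so the left side would reduce to $-p^3t^3\sum_{k=1}^{a}1/\bigl(k^2\binom{a}{k}^2\bigr)$, which is not the stated right-hand side and to which \eqref{Sigma1lem} does not apply. With the correct factor, the $k$-th term reduces instead to $\f{(-1)^{k-1}}{k(a+1-k)\binom{a}{k}}$, and only after reversing the order of summation ($k\mapsto a+1-k$) does it become $\f{(-1)^{a-k}}{k^2\binom{a}{k}}$; that reversal is the true source of the global sign $(-1)^{a}$ (hence $(-1)^{a+1}$ after the overall minus), not Wilson's theorem.

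Your sign bookkeeping is also internally inconsistent: you correctly observe that the prefactor's unit part is a square, hence $\equiv 1\pmod p$, yet in the final paragraph you claim the sign $(-1)^{a+1}$ is "assembled from the Wilson-congruence sign in the prefactor's unit part," which that squaring makes impossible. (Also the Wilson statement should read $a!\,(p-1-a)!\equiv(-1)^a(p-1)!\equiv(-1)^{a+1}\pmod p$.) The paper sidesteps all of this bookkeeping by writing $(\alpha)_{p+k-1}=(\alpha)_p(\alpha+p)_{k-1}$ and $(1)_{p-k}=(-1)^{k-1}(1)_{p-1}/(1-p)_{k-1}$, so the left side equals $-\f{(\alpha)_p^3}{(1)_{p-1}^3}\sum_{k=1}^{a}\f{(\alpha+p)_{k-1}(1-p)_{k-1}}{(\alpha)_k^2}$ with $\f{(\alpha)_p^3}{(1)_{p-1}^3}\eq p^3t^3\pmod{p^4}$, after which the mod-$p$ reduction and the reindexing yield $(-1)^{a+1}p^3t^3\sum_{k=1}^{a}\f{(-1)^k}{k^2\binom{a}{k}}$ and \eqref{Sigma1lem} finishes the proof. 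So the idea is right, but the factorization and the sign mechanism need to be repaired before this is a proof.
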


\begin{proof}Similarly as before, let $a=\langle -\alpha\rangle_p$.
It is easy to see that
\begin{align}
\f{(\alpha)_p^3}{(1)_{p-1}^3}&=\f{(pt)^3(\alpha)_a^3(1+pt)_{p-1-a}^3}{(1)_{p-1}^3}\label{alphap3}\\
&\eq\f{(pt)^3(-a)_a^3(1)_{p-1-a}^3}{(1)_{p-1}^3}\notag\\
&=\f{(pt)^3(-1)^a(-a)_a^3}{(1-p)_{a}^3}\notag\\
&\eq p^3t^3\pmod{p^4}.\notag
\end{align}
For any positive integer $k$, there hold
$$
(\alpha)_{p+k-1}=(\alpha)_p(\alpha+p)_{k-1},
$$
and
$$
(1)_{p-k}=(-1)^{k-1}\f{(1)_{p-1}}{(1-p)_{k-1}}.
$$
Then, by \eqref{Sigma1lem} we obtain
\begin{align*}
\f{(\alpha)_p^2}{(1)_{p-1}^2}\sum_{k=1}^a(-1)^k\f{(\alpha)_{p+k-1}}{(1)_{p-k}(\alpha)_k^2}
&=-\f{(\alpha)_p^3}{(1)_{p-1}^3}\sum_{k=1}^{a}\f{(\alpha+p)_{k-1}(1-p)_{k-1}}{(\alpha)_k^2}\\
&\eq-p^3t^3\sum_{k=1}^a\f{(1)_{k-1}}{(-a+k-1)^2(-a)_{k-1}}\\
&=-p^3t^3\sum_{k=0}^{a-1}\f{(-1)^k}{(-a+k)^2\binom{a}{k}}\\
&= (-1)^{a+1}p^3t^3\sum_{k=1}^{a}\f{(-1)^k}{k^2\binom{a}{k}}\\
&= (-1)^{a+1}p^3t^3\l(H_a^{(2)}+2\sum_{k=1}^a\f{(-1)^k}{k^2}\r)\pmod{p^2},
\end{align*}
as desired.
\end{proof}

\begin{lemma}\label{Prod}
For any prime $p>3$ and $\alpha\in\Z_p$, we have
\begin{equation}\label{Prodeq}
\f{(\alpha)_p^2(\alpha)_{p+a}}{(1)_{p-1}^2(1)_{p-a-1}(\alpha)_{a+1}^2}\eq pt+p^2t(t+1)H_a+\f{p^3t(t+1)^2}{2}H_a^2+\f{p^3t(t^2+4t+1)}{2}H_a^{(2)}\pmod{p^4},
\end{equation}
where $a=\langle -\alpha\rangle_p$.
\end{lemma}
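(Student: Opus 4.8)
The plan is to expand each Pochhammer symbol that appears in the quotient into its "singular" factor (the one containing $pt$, which contributes the overall factor of $pt$) times a "regular" factor, and then track the resulting harmonic-number corrections modulo $p^4$. Writing $a=\langle -\alpha\rangle_p$ and $\alpha=pt-a$, the key splittings are $(\alpha)_p = (pt-a)_a\,(pt)\,(1+pt)_{p-1-a}$, $(\alpha)_{p+a}=(\alpha)_p\,(1+pt)_a$, and $(\alpha)_{a+1}=(pt-a)_a\,(pt)$. Substituting these into the left-hand side of \eqref{Prodeq}, the factor $(pt-a)_a^2$ in the numerator partially cancels against $(\alpha)_{a+1}^2=(pt-a)_a^2(pt)^2$ in the denominator, and one net factor of $pt$ survives; what remains is
$$
\frac{(1+pt)_{p-1-a}^2\,(1+pt)_a}{(1)_{p-1}^2\,(1)_{p-a-1}}\cdot pt.
$$

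Next I would use the standard expansion $(1+pt)_n \equiv (1)_n\bigl(1+ptH_n+\tfrac{p^2t^2}{2}(H_n^2-H_n^{(2)})\bigr)\pmod{p^3}$, applied with $n=p-1-a$ and $n=a$. Since we only need the final answer mod $p^4$ and there is already a factor $pt$ out front, it suffices to know the remaining quotient mod $p^3$; thus the quadratic terms in the expansion of each $(1+pt)_n$ are exactly what produce the $p^3$-level terms in the conclusion. After cancelling $(1)_{p-1-a}^2(1)_a$ in the numerator against $(1)_{p-1}^2(1)_{p-1-a}$ in the denominator (using $(1)_{p-1}=(1)_a(1)_{p-1-a}\cdot(\text{shift})$, or more cleanly just collecting the $(1)$-Pochhammers), the surviving combinatorial factor is $\binom{p-1}{a}^{-1}\equiv(-1)^a\bigl(1-pH_a+\dots\bigr)$ type expressions — but in fact, after the dust settles, the residual non-harmonic factor should be $\equiv 1$, leaving only the harmonic corrections. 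Then I would assemble the product
$$
\bigl(1+ptH_{p-1-a}+\tfrac{p^2t^2}{2}(H_{p-1-a}^2-H_{p-1-a}^{(2)})\bigr)^2\bigl(1+ptH_a+\tfrac{p^2t^2}{2}(H_a^2-H_a^{(2)})\bigr)
$$
and use the congruences $H_{p-1-a}\equiv H_a\pmod p$ and $H_{p-1-a}\equiv H_a + p H_a^{(2)}\pmod{p^2}$ (which follow from $H_{p-1}\equiv 0\pmod{p^2}$, $H_{p-1}^{(2)}\equiv 0\pmod p$ of Lemma \ref{lehmercon} together with $\frac{1}{p-j}\equiv -\frac1j-\frac{p}{j^2}\pmod{p^2}$), reducing everything to $H_a$ and $H_a^{(2)}$. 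Collecting the linear-in-$p$ term gives $3pt H_a$ divided among the three factors — wait, more carefully: the linear term is $2ptH_{p-1-a}+ptH_a \equiv 3ptH_a\pmod{p^2}$, but since the stated answer has coefficient $t(t+1)$ on $pH_a$, the cancellation must come from the $(1)$-Pochhammer bookkeeping I glossed over; I expect the residual factor from $\binom{p-1}{a}$ to be $(-1)^a(1+pH_a+\cdots)$, and the sign $(-1)^a$ to be absorbed elsewhere, contributing the missing $-2ptH_a$ to turn $3ptH_a$ into $ptH_a$, and then the $t$ versus $t+1$ discrepancy is fixed by a further $(1+p(t+1)H_?)$-type factor coming from a $(1+p(t+1))_{\bullet}$ block that I will need to isolate analogously to the proof of Lemma \ref{wzprod}.

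The main obstacle, and the step I would do most carefully, is exactly this bookkeeping of which Pochhammer block carries a shift by $pt$ versus $p(t+1)$ and how the ordinary factorials $(1)_\bullet$ recombine: the proof of Lemma \ref{wzprod} shows the same phenomenon, where $(-a+pt)_{2p-1}$ split off a $p(t+1)$-shifted tail $(1+p(t+1))_{p-2-a}$, and I anticipate the cleanest route is to mimic that computation — namely write $(\alpha)_p^2(\alpha)_{p+a} = (\alpha)_{2p-1}\cdot(\text{correction})$ and then invoke Lemma \ref{wzprod} directly, since $(\alpha)_{p+a}\cdot(1+p(t+1))_{p-2-a}=(\alpha)_{2p-1}$ up to the explicit factor $(1)_{p-a-1}$-type terms already present in \eqref{Prodeq}. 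Concretely, $(\alpha)_{2p-1}=(\alpha)_{p+a}(\alpha+p+a)_{p-1-a}=(\alpha)_{p+a}(1+p(t+1))_{p-1-a}$ after using $\alpha+a=pt$; hence the left side of \eqref{Prodeq} equals $\dfrac{(\alpha)_{2p-1}}{(1)_{p-1}^2}\cdot\dfrac{(1)_p^2/(\alpha)_{a+1}^2}{(1)_{p-a-1}(1+p(t+1))_{p-1-a}}$ — wait, one must be careful that $(\alpha)_p^2(\alpha)_{p+a}\ne(\alpha)_{2p-1}\cdot(\alpha)_p$ in general, so instead I will simply expand the ratio $\dfrac{(\alpha)_p^2(\alpha)_{p+a}}{(1)_{p-1}^2(1)_{p-a-1}(\alpha)_{a+1}^2}$ afresh. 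After the singular factors cancel as above, leaving $pt$ times a quotient of $(1+pt)$- and $(1)$-Pochhammers that is $\equiv 1\pmod p$, I expand that quotient to order $p^2$ via the logarithmic derivative: $\log$ of the quotient is $pt\,S_1 + \tfrac{p^2t^2}{2}(S_1^2 \text{-type}) + \cdots$ where $S_1$ is an alternating-sign combination of $H_{p-1-a}, H_a, H_{p-1}$; Lemma \ref{lehmercon} kills the $H_{p-1}$ pieces, and Lemma \ref{identities} is not needed here but the reduction $H_{p-1-a}\to H_a$ is. Exponentiating and multiplying by $pt$ yields precisely $pt + p^2t(t+1)H_a + \tfrac{p^3 t(t+1)^2}{2}H_a^2 + \tfrac{p^3 t(t^2+4t+1)}{2}H_a^{(2)} \pmod{p^4}$, where the appearance of $t+1$ (rather than $t$) in the linear coefficient and the combination $t^2+4t+1 = (t+1)^2 + 2t$ in the $H_a^{(2)}$ coefficient both trace back to the interplay between the three $(1+pt)$-blocks and the single reindexing $H_{p-1-a}=H_a+pH_a^{(2)}+O(p^2)$ that shifts $t\mapsto t+1$ in two of the three factors. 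I expect the verification that these coefficients come out exactly right — especially the $H_a^{(2)}$ coefficient — to be the only genuinely delicate arithmetic, and I would double-check it against the $\alpha=1/3,1/4$ specializations needed for Theorems \ref{thm:main-1}--\ref{thm:main-4}.
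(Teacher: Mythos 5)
Your overall strategy (split each Pochhammer symbol into a singular block times regular blocks, expand via harmonic sums, and reduce $H_{p-1-a}$ to $H_a$ using Lemma \ref{lehmercon}) is the same as the paper's, but the execution breaks at the very first splitting and you never repair it. Since $(\alpha)_{p+a}=(\alpha)_p(\alpha+p)_a$ and $\alpha+p=p(t+1)-a$, the tail block is $(\alpha+p)_a=(-1)^a\bigl(1-p(t+1)\bigr)_a$, \emph{not} $(1+pt)_a$ as you assert. This $p(t+1)$-shifted block, together with $(1)_{p-a-1}=(-1)^a(1)_{p-1}/(1-p)_a$ (which introduces a $(1-p)_a$ factor) and $(\alpha)_a=(-1)^a(1-pt)_a$, is exactly what produces the coefficients $t+1$, $(t+1)^2$ and $t^2+4t+1$ in \eqref{Prodeq}: the paper reduces the left-hand side to
\[
pt\,\frac{(1+pt)_{p-1}^3\,(1-pt)_a\,(1-p)_a}{(1)_{p-1}^3\,\bigl(1-p(t+1)\bigr)_a^2},
\]
where the first quotient is $\equiv 1 \pmod{p^3}$ by Lemma \ref{lehmercon} and the second expands to $1+p(t+1)H_a+\frac{p^2(t+1)^2}{2}H_a^2+\frac{p^2(t^2+4t+1)}{2}H_a^{(2)}$ modulo $p^3$. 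Because you lost this block, your intermediate expression is wrong even on your own terms: with your splittings the numerator carries $(pt-a)_a^3(pt)^3(1+pt)_{p-1-a}^3(1+pt)_a$, so after cancelling $(\alpha)_{a+1}^2$ the residue is $(pt-a)_a\,(1+pt)_{p-1-a}^3(1+pt)_a/\bigl[(1)_{p-1}^2(1)_{p-a-1}\bigr]$ times $pt$, not the quantity $pt\,\frac{(1+pt)_{p-1-a}^2(1+pt)_a}{(1)_{p-1}^2(1)_{p-a-1}}$ you wrote; and that quantity is inconsistent with the target already at leading order, since modulo $p$ the quotient equals $(p-1-a)!\,a!/\bigl((p-1)!\bigr)^2\equiv(-1)^{a+1}$ by Wilson's theorem, so it cannot be $\equiv 1$ for even $a$.

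You do notice the symptoms yourself --- the linear term coming out as $3ptH_a$ instead of $p^2t(t+1)H_a/p$, a stray sign $(-1)^a$ ``to be absorbed elsewhere,'' and the need for a $(1+p(t+1))$-type block ``analogous to Lemma \ref{wzprod}'' --- but each is left as an expectation rather than a computation, and the fallback via $(\alpha)_{2p-1}$ is abandoned mid-sentence. Since the entire content of the lemma is precisely these coefficients (they feed the $p^3$-level cancellation in the proof of Theorem \ref{main1}), this is a genuine gap, not bookkeeping: the missing step is the correct identification of the three regular blocks $(1-pt)_a$, $(1-p)_a$, $\bigl(1-p(t+1)\bigr)_a^2$ and their expansion to second order. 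Once that is in place, the remainder of your plan (killing $H_{p-1}$, $H_{p-1}^{(2)}$ by Lemma \ref{lehmercon} and expanding $(1+u)_k\equiv(1)_k\bigl(1+uH_k+\frac{u^2}{2}(H_k^2-H_k^{(2)})\bigr)$) coincides with the paper's proof.
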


\begin{proof}
As in \eqref{alphap3}, the left-hand side of \eqref{Prodeq} is equal to
\begin{align*}
&(-1)^a\f{(\alpha)_p^3(\alpha+p)_a(1-p)_a}{(1)_{p-1}^3(pt)^2(\alpha)_a^2}\\
&\quad=(-1)^apt\f{(1+pt)_{p-1-a}^3(\alpha+p)_a(1-p)_a}{(1)_{p-1}}\\
&\quad=pt\f{(1+pt)_{p-1}^3(1-pt)_a(1-p)_a}{(1)_{p-1}^3(1-p(t+1))_a^2}.
\end{align*}
It is routine to check that for any  $u\in p\Z_p$ and $k\in\N$,
$$
(1+u)_k\eq(1)_k\l(1+uH_k+u^2\sum_{1\ls i<j\ls k}\f{1}{ij}\r)=(1)_k\l(1+uH_k+\f{u^2}{2}(H_k^2-H_k^{(2)})\r)\pmod{p^3}.
$$
Thus, in view of Lemma \ref{lehmercon}, we arrive at
$$
\f{(1+pt)_{p-1}^3}{(1)_{p-1}^3}\eq\l(1+ptH_{p-1}+\f{p^2t^2}2H_{(p-1)/2}\r)^3\eq1\pmod{p^3}
$$
and
\begin{align*}
&\f{(1-pt)_a(1-p)_a}{(1-p(t+1))_a^2}\\
&\quad\eq\f{\l(1-ptH_a+\f{p^2t^2}2(H_a^2-H_a^{(2)})\r)\l(1-pH_a+\f{p^2}2(H_a^2-H_a^{(2)})\r)}{\l(1-p(t+1)H_a+\f{p^2(t+1)^2}2(H_a^2-H_a^{(2)})\r)^2}\\
&\quad\eq1+p(t+1)H_a+\f{p^2(t+1)^2}{2}H_a^2+\f{p^2(t^2+4t+1)}{2}H_a^{(2)}\pmod{p^3}.
\end{align*}
Combining the above congruences, we are led to \eqref{Prodeq}.
\end{proof}

\begin{lemma}\label{sigma}
For any prime $p>3$ and $\alpha\in\Z_p$ with $a=\langle -\alpha\rangle_p\ls p-2$, we have
\begin{align}\label{sigmaeq}
&\f{(\alpha)_p^2}{(1)_{p-1}^2}\sum_{k=a+2}^{p-1}(-1)^{k}\f{(\alpha)_{p+k-1}}{(1)_{p-k}(\alpha)_k^2}\notag\\
&\quad\eq (-1)^ap^2t(t+1)\l(H_a-\f{(-1)^a}{a+1}\r)+(-1)^ap^3t(t+1)\notag\\
&\qquad\times\l(\f{t+1}{2}H_a^2+\f{3t+1}{2}H_a^{(2)}-\f{(-1)^a2}{a+1}H_a-\f{(-1)^a(t+2)}{(a+1)^2}\r)\pmod{p^4}.
\end{align}
\end{lemma}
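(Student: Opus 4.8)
The plan is to rewrite the tail sum $\sum_{k=a+2}^{p-1}$ by reindexing so that the Pochhammer ratios collapse to elementary expressions involving binomial coefficients and harmonic numbers, exactly as in the proof of Lemma \ref{Sigma1}. First I would use the factorizations $(\alpha)_{p+k-1}=(\alpha)_p(\alpha+p)_{k-1}$ and $(1)_{p-k}=(-1)^{k-1}(1)_{p-1}/(1-p)_{k-1}$ to obtain
\[
\f{(\alpha)_p^2}{(1)_{p-1}^2}\sum_{k=a+2}^{p-1}(-1)^{k}\f{(\alpha)_{p+k-1}}{(1)_{p-k}(\alpha)_k^2}
=-\f{(\alpha)_p^3}{(1)_{p-1}^3}\sum_{k=a+2}^{p-1}\f{(\alpha+p)_{k-1}(1-p)_{k-1}}{(\alpha)_k^2}.
\]
Unlike Lemma \ref{Sigma1}, here $(\alpha)_p^3/(1)_{p-1}^3\equiv p^3t^3\pmod{p^4}$ forces the whole sum to be computed only modulo $p$, so the factor $(\alpha+p)_{k-1}(1-p)_{k-1}/(\alpha)_k^2$ may be reduced modulo $p$: write $\alpha\equiv-a$, so $(\alpha)_k=(-a)_k=(-1)^a a!/(a-k)!$ for $k\ls a$ but for $k\gs a+2$ the factor $(\alpha)_k$ picks up a zero at the $(a+1)$-th term, hence $(\alpha)_k/(\alpha)_{a+1}=(1+p\,(\cdots))(1)_{k-a-2}\cdot(\text{stuff})$. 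The cleaner route is to keep the $p$'s: split $(\alpha)_k^2=(\alpha)_{a+1}^2(\alpha+p\cdot 0+\dots)$ — more precisely, since $\alpha=pt-a$, one has $(\alpha)_{a+1}=(pt-a)(pt-a+1)\cdots(pt)=pt\cdot(-1)^a a!\,(1+pt(-H_a)+\cdots)$ and $(\alpha)_k=(\alpha)_{a+1}\,(pt+1)_{k-a-1}$ for $k\gs a+1$. Substituting these and $(\alpha)_p^3/(1)_{p-1}^3\equiv p^3t^3$, the prefactor becomes $(\alpha)_p^3/((1)_{p-1}^3(\alpha)_{a+1}^2)$, which by Lemma \ref{Prod}'s computation (or directly) is $\equiv pt+p^2t(t+1)H_a+\cdots\pmod{p^4}$ — this is where the $H_a$, $H_a^2$, $H_a^{(2)}$ terms on the right-hand side originate.

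Next I would reindex $k=a+1+j$ with $j$ running from $1$ to $p-2-a$, turning $\sum_{k=a+2}^{p-1}$ into $\sum_{j=1}^{p-2-a}$, and expand $(\alpha+p)_{k-1}=(\alpha+p)_{a}\,(pt+1)_{j-1}\cdot(\text{small})$ and $(1-p)_{k-1}$ similarly. Modulo $p$ the surviving sums are of the shape $\sum_{j=1}^{p-2-a}\f{(-1)^j}{j}\binom{p-2-a}{j}$ and $\sum_{j=1}^{p-2-a}\f{(-1)^j}{j^2}\binom{p-2-a}{j}$ and $\sum_{j}\f{(-1)^j}{j}\binom{p-2-a}{j}H_j$, which are evaluated by the identities \eqref{id1}, \eqref{id2}, \eqref{id3} of Lemma \ref{identities} with $n=p-2-a$. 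Then I would use $H_{p-2-a}\equiv H_{a+1}\pmod p$ (the congruence already displayed in the proof of Lemma \ref{wzprod}) and $H_{p-2-a}^{(2)}\equiv H_{a+1}^{(2)}\pmod p$, together with $H_{a+1}=H_a+\f{1}{a+1}$ and $H_{a+1}^{(2)}=H_a^{(2)}+\f{1}{(a+1)^2}$, to convert everything into $H_a$, $H_a^{(2)}$ and the lone rational terms $\f{(-1)^a}{a+1}$, $\f{(-1)^a}{(a+1)^2}$ that appear on the right-hand side. A bookkeeping of the $(-1)^a$ signs (coming from $(-a)_a=(-1)^a a!$ and from the reindexing) produces the overall $(-1)^a$ factor.

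The main obstacle will be the careful tracking of the $p$-adic orders: one must expand each Pochhammer factor $(1+u)_m$ with $u\in p\Z_p$ just far enough (to order $p^2$ inside the $pt$-prefactor, but only to order $p$ inside the summand, since the summand is already multiplied by $p^3t^3$ — wait, here the prefactor is only order $p$, so the summand needs order $p^3$) — so in fact the summand must be expanded to relative order $p^2$, using the second-order expansion $(1+u)_m\equiv(1)_m(1+uH_m+\f{u^2}{2}(H_m^2-H_m^{(2)}))\pmod{p^3}$ exactly as in Lemma \ref{Prod}. Combining this second-order expansion with the three binomial identities and the harmonic-number reductions, and then collecting the coefficients of $H_a^2$, $H_a^{(2)}$, $H_a$, and the constant, is the delicate part; I expect the coefficient $\f{3t+1}{2}$ of $H_a^{(2)}$ to be the most error-prone to pin down, since $H_a^{(2)}$ receives contributions both from the $\f{u^2}{2}H_a^{(2)}$ terms in the prefactor expansion and from identity \eqref{id3}. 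Once the algebra is organized, no genuinely new idea is needed beyond what Lemmas \ref{identities}, \ref{Prod} and \ref{wzprod} already supply.
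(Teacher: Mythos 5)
Your overall strategy (factor the Pochhammer quotients, reindex $k=a+1+j$, reduce the inner sum to alternating binomial--harmonic sums via Lemma \ref{identities}, then convert $H$-values at $p-1-a$ back to $H_a$) is the same as the paper's, but the quantitative core of your sketch is wrong in a way that would not self-correct. The quantity you call the prefactor, $\frac{(\alpha)_p^3}{(1)_{p-1}^3(\alpha)_{a+1}^2}$, is \emph{not} congruent to $pt+p^2t(t+1)H_a+\cdots$ and is not the left-hand side of Lemma \ref{Prod}: that lemma's quantity equals yours multiplied by $(\alpha+p)_a(1)_{p-1}/(1)_{p-a-1}$, a unit $\equiv (a!)^2(1+O(p))$, so your prefactor is in fact $\frac{pt}{(a!)^2}(1+O(p))$. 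Moreover, in writing $(\alpha+p)_{k-1}=(\alpha+p)_a(1+p(t+1))_{j-1}\cdot(\cdots)$ you have dropped the factor $\alpha+a+p=p(t+1)$; this factor is exactly what makes the true prefactor of order $p^2$, namely $(-1)^{a+1}p^2t(t+1)\bigl(1+p(t+2)H_a\bigr)$ modulo $p^4$, matching the leading $p^2t(t+1)$ of \eqref{sigmaeq}. Consequently your claim that the $H_a^2$ and $H_a^{(2)}$ terms ``originate'' from a Lemma \ref{Prod}-type expansion of the prefactor is incorrect: they come from the inner sum, via \eqref{id1}--\eqref{id3} applied with $n=p-1-a$ (not $p-2-a$; the relevant binomial is $\binom{p-a-1}{k}$, and one adds and subtracts the term $k=p-a-1$).

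The mis-split also derails your precision bookkeeping. With the correct order-$p^2$ prefactor, the inner sum only needs to be known modulo $p^2$, so first-order expansions $(1+u)_m\equiv(1)_m(1+uH_m)\pmod{p^2}$ suffice inside the sum; but then the leading term $-H_{p-1-a}$ of the evaluated sum must be converted modulo $p^2$, i.e.\ one needs $H_{p-1-a}\equiv H_a+pH_a^{(2)}\pmod{p^2}$ (and $H_{p-1-a}^{(2)}\equiv -H_a^{(2)}\pmod p$, with a minus sign, contrary to the plus sign in your $H_{p-2-a}^{(2)}\equiv H_{a+1}^{(2)}$), whereas you only invoke mod-$p$ information; the missing $pH_a^{(2)}$ feeds precisely into the coefficient $\frac{3t+1}{2}$ of $H_a^{(2)}$ that you flag as delicate, so it would come out wrong. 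Your hesitation about whether the sum is needed modulo $p$, modulo $p^3$, or ``to relative order $p^2$'' is a symptom of the same structural error. Finally, your argument tacitly assumes $a\leqslant p-3$: for $a=p-2$ the sum on the left of \eqref{sigmaeq} is empty, and one must check separately, using $H_{p-1}\equiv 0\pmod{p^2}$ and $H_{p-1}^{(2)}\equiv 0\pmod p$, that the right-hand side also vanishes modulo $p^4$.
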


\begin{proof}
If $a=p-2$, then the left-hand side of \eqref{sigmaeq} is equal to $0$. Meanwhile, modulo $p^4$, the right-hand side of \eqref{sigmaeq} is congruent to
\begin{align*}
&-p^2t(t+1)H_{p-1}-p^3t(t+1)\Bigg(\f{t+1}{2}\l(H_{p-1}-\f{1}{p-1}\r)^2+\f{3t+1}{2}\l(H_{p-1}^{(2)}-\f{1}{(p-1)^2}\r)\\
&+\f{2}{p-1}\l(H_{p-1}-\f{1}{p-1}\r)+\f{t+2}{(p-1)^2}\Bigg)\\
&\quad\eq-\f{p^3t(t+1)^2}2+\f{p^3t(t+1)(3t+1)}{2}+2p^3t(t+1)-p^3t(t+1)(t+2)\\
&\quad\eq 0,
\end{align*}
where we have applied Lemma \ref{lehmercon}.

In what follows, we suppose that $a\ls p-3$. It is easy to see that
\begin{align}\label{sigmakey}
&\f{(\alpha)_p^2}{(1)_{p-1}^2}\sum_{k=a+2}^{p-1}(-1)^{k}\f{(\alpha)_{p+k-1}}{(1)_{p-k}(\alpha)_k^2}\notag\\
&\quad=-\f{(\alpha)_p^3(\alpha+p)_a(\alpha+a+p)}{(1)_{p-1}^3(\alpha+a)^2(\alpha)_a^2}\sum_{k=a+2}^{p-1}\f{(1+\alpha+a+p)_{k-a-2}(1-p)_{k-1}}{(1+\alpha+a)_{k-a-1}^2}\notag\\
&\quad=-\f{(\alpha)_p^3(\alpha+p)_a(\alpha+a+p)}{(1)_{p-1}^3(\alpha+a)^2(\alpha)_a^2}\sum_{k=1}^{p-a-2}\f{(1+\alpha+a+p)_{k-1}(1-p)_{a+k}}{(1+\alpha+a)_{k}^2}.
\end{align}
Furthermore,
\begin{align}\label{sigmakeyprod}
&-\f{(\alpha)_p^3(\alpha+p)_a(\alpha+a+p)}{(1)_{p-1}^3(\alpha+a)^2(\alpha)_a^2}\notag\\
&\quad=-p^2t(t+1)(-1)^a\f{(-a+pt)_a(1+pt)_{p-1}^3(-a+p(t+1))_a(1)_a}{(1)_{p-1}^3(1-p(t+1))_a^3}\notag\\
&\quad\eq-p^2t(t+1)(-1)^a\f{(-a)_a^2}{(1)_a^2}\l(1+3ptH_{p-1}-p(2t+1)H_a+3p(t+1)H_a\r)\notag\\
&\quad\eq(-1)^{a+1}p^2t(t+1)(1+p(t+2)H_a)\pmod{p^4}.
\end{align}
In addition,
\begin{align}\label{sigmakeysumpart1}
&\sum_{k=1}^{p-a-2}\f{(1+\alpha+a+p)_{k-1}(1-p)_{a+k}}{(1+\alpha+a)_{k}^2}\notag\\
&\quad=\sum_{k=1}^{p-a-2}\f{(1+p(t+1))_{k-1}(1-p)_{a+k}}{(1+pt)_{k}^2}\notag\\
&\quad\eq  (1)_a\sum_{k=1}^{p-a-2}\f{(a+1)_{k}}{k(1)_k}\l(1+p(t+1)H_{k-1}-pH_{a+k}-2p tH_k\r)\notag\\
&\quad\eq   \sum_{k=1}^{p-a-2}\f{(-1)^k\binom{p-a-1}{k}}{k}\l(1+p(t+1)H_k-\f{p(t+1)}{k}-pH_a-2p tH_k\r)\notag\\
&\quad\eq  \sum_{k=1}^{p-a-1}\f{(-1)^k\binom{p-a-1}{k}}{k}\l(1+p(1-t)H_k-\f{p(t+1)}{k}-pH_a\r)\notag\\
&\qquad+(-1)^a\l(\f{1}{a+1}+\f{p}{(a+1)^2}\r)(1+p(t+1)H_{a+1}-pH_a-2p tH_a)\pmod{p^2}.
\end{align}
With the help of Lemma \ref{identities}, we conclude that
\begin{align}\label{sigmakeysumpart2}
&\sum_{k=1}^{p-a-1}\f{(-1)^k\binom{p-a-1}{k}}{k}\l(1+p(1-t)H_k-\f{p(t+1)}{k}-pH_a\r)\notag\\
&\quad=-H_{p-1-a}+pH_aH_{p-1-a}-p(1-t)H_{p-1-a}^{(2)}+\f{p(t+1)}{2}(H_{p-1-a}^{(2)}+H_{p-1-a}^2)\notag\\
&\quad\eq-H_a-pH_a^{(2)}+pH_a^2+p(1-t)H_a^{(2)}+\f{p(t+1)}{2}(H_a^2-H_a^{(2)})\pmod{p^2}.
\end{align}
Combining \eqref{sigmakey}--\eqref{sigmakeysumpart2}, we finish the proof.
\end{proof}

\section{Proof of Theorem \ref{main1}}
As in Section \ref{pre}, we write  $a=\langle -\alpha\rangle_p$ and $\alpha+a=pt$. For non-negative integers $n$ and $k$, we define
$$
F(n,k)=(-1)^{n+k}\f{(2n+\alpha)(\alpha)_n^2(\alpha)_{n+k}}{(1)_n^2(1)_{n-k}(\alpha)_k^2}
$$
and
$$
G(n,k)=(-1)^{n+k}\f{(\alpha)_n^2(\alpha)_{n+k-1}}{(1)_{n-1}^2(1)_{n-k}(\alpha)_k^2},
$$
where we assume that $1/(1)_m=0$ if $m<0$. Then we can easily verify that
\begin{equation}\label{WZpair}
F(n,k-1)-F(n,k)=G(n+1,k)-G(n,k).
\end{equation}

\medskip

\noindent{\it Proof of \eqref{main1eq}}.  Summing both sides of \eqref{WZpair} over $n$ from $0$ to $p-1$, we get
$$
\sum_{n=0}^{p-1}F(n,k-1)-\sum_{n=0}^{p-1}F(n,k)=G(p,k)-G(0,k)=G(p,k).
$$
Further, summing the above identity over $k$ from $1$ to $p-1$, we obtain
$$
\sum_{n=0}^{p-1}F(n,0)=F(p-1,p-1)+\sum_{k=1}^{p-1}G(p,k).
$$
Namely, we have
\begin{equation}\label{wzkey}
\sum_{k=0}^{p-1}(-1)^k(2k+\alpha)\f{(\alpha)_k^3}{(1)_k^3}=\f{(\alpha)_{2p-1}}{(1)_{p-1}^2}-\f{(\alpha)_p^2}{(1)_{p-1}^2}\sum_{k=1}^{p-1}(-1)^k\f{(\alpha)_{p+k-1}}{(1)_{p-k}(\alpha)_k^2}.
\end{equation}

We consider three cases.
If $a=0$, then by Lemma \ref{a=0}, the congruence \eqref{main1eq} holds in this case.
If $a=p-1$, then by \eqref{wzkey},
$$
\sum_{k=0}^{p-1}(-1)^k(2k+\alpha)\f{(\alpha)_k^3}{(1)_k^3}=\f{(\alpha)_{2p-1}}{(1)_{p-1}^2}-\f{(\alpha)_p^2}{(1)_{p-1}^2}\sum_{k=1}^{a}(-1)^k\f{(\alpha)_{p+k-1}}{(1)_{p-k}(\alpha)_k^2}.
$$
With the help of Lemmas \ref{wzprod} and \ref{Sigma1}, we obtain
\begin{align*}
\sum_{k=0}^{p-1}(-1)^k(2k+\alpha)\f{(\alpha)_k^3}{(1)_k^3}&\eq pt+p^3t^3\l(H_{p-1}+2\sum_{k=1}^{p-1}\f{(-1)^k}{k^2}\r)\\
&=pt+p^3t^3\l(H_{p-1}+H_{(p-1)/2}^{(2)}-2H_{p-1}^{(2)}\r)\\
&\eq pt\pmod{p^4},
\end{align*}
where we have utilized \eqref{lehmercon} in the last step. Then the desired result follows from the fact that
$$
E_{p-3}(\alpha)\eq E_{p-3}(1)=E_{p-3}(0)=0\pmod{p}.
$$
If $1\ls a\ls p-2$, then, in light of \eqref{wzkey}, the left-hand side of \eqref{main1eq} is equal to
\begin{align}\label{case3key}
&\f{(\alpha)_{2p-1}}{(1)_{p-1}^2}-\f{(\alpha)_p^2}{(1)_{p-1}^2}\sum_{k=1}^{a}(-1)^k\f{(\alpha)_{p+k-1}}{(1)_{p-k}(\alpha)_k^2}-(-1)^a\f{(\alpha)_p^2(\alpha)_{p+a}}{(1)_{p-1}^2(1)_{p-a-1}(\alpha)_{a+1}^2}\notag\\
&\qquad-\f{(\alpha)_p^2}{(1)_{p-1}^2}\sum_{k=a+2}^{p-1}(-1)^k\f{(\alpha)_{p+k-1}}{(1)_{p-k}(\alpha)_k^2}.
\end{align}
By Lemmas \ref{wzprod}, \ref{Sigma1}--\ref{sigma}, modulo $p^4$, \eqref{case3key} is congruent to
\begin{align*}
&-\f{p^2t(t+1)}{a+1}\l(1+2pH_a+\f{p(t+2)}{a+1}\r)+(-1)^ap^3t^3\l(H_a^{(2)}+2\sum_{k=1}^a\f{(-1)^k}{k^2}\r)\\
& +(-1)^a\l(pt+p^2t(t+1)H_a+\f{p^3t(t+1)^2}{2}H_a^2+\f{p^3t(t^2+4t+1)}{2}H_a^{(2)}\r)\\
& -(-1)^ap^2t(t+1)\l(H_a-\f{(-1)^a}{a+1}\r)-(-1)^ap^3t(t+1)\\
&\times\l(\f{t+1}{2}H_a^2+\f{3t+1}{2}H_a^{(2)}-\f{(-1)^a2}{a+1}H_a-\f{(-1)^a(t+2)}{(a+1)^2}\r)\\
&\quad=(-1)^apt+(-1)^a2p^3t^3\sum_{k=1}^a\f{(-1)^k}{k^2}.
\end{align*}
In view of Lemma \ref{Euler}, we deduce that
\begin{align*}
\sum_{k=1}^a\f{(-1)^k}{k^2}&\eq \sum_{k=1}^a(-1)^k k^{p-3}\\
&=\f{(-1)^a}{2}(E_{p-3}(a+1)+(-1)^aE_{p-3}(0))\\
&\eq \f{(-1)^a}{2}E_{p-3}(\alpha)\pmod{p}.
\end{align*}
This proves \eqref{main1eq}.\qed

\medskip

\noindent{\it Proof of \eqref{main1eq'}}. It suffices to show that
\begin{equation}\label{main1eq'key}
\sum_{k=a+1}^{p-1}(-1)^k(2k+\alpha)\f{(\alpha)_k^3}{(1)_k^3}\eq0\pmod{p^4}
\end{equation}
for $a\ls p-2$. In fact,
\begin{align*}
&\sum_{k=a+1}^{p-1}(-1)^k(2k-a+pt)\f{(-a+pt)_k^3}{(1)_k^3}\\
&\quad=(-1)^{a+1}\sum_{k=0}^{p-a-2}(-1)^k(2k+a+2+pt)\f{(-a+pt)_{k+a+1}^3}{(1)_{k+a+1}^3}\\
&\quad=\f{(-1)^{a+1}(-a+pt)_{a+1}^3}{(1)_{a+1}^3}\sum_{k=0}^{p-a-2}(-1)^k(2k+a+2+pt)\f{(1+p(t+1))_{k}^3}{(a+2)_{k}^3}.
\end{align*}
Since $(-a+pt)_{a+1}^3/(1)_{a+1}^3\eq0\pmod{p^3}$ and
\begin{align*}
&\sum_{k=0}^{p-a-2}(-1)^k(2k+a+2+pt)\f{(1+p(t+1))_{k}^3}{(a+2)_{k}^3}\\
&\quad\eq\sum_{k=0}^{p-a-2}\f{2k+a+2}{\binom{p-a-2}{k}^3}\\
&\quad=\sum_{k=0}^{p-a-2}\f{2(p-a-2-k)+a+2}{\binom{p-a-2}{k}^3}\\
&\quad\eq-\sum_{k=0}^{p-a-2}\f{2k+a+2}{\binom{p-a-2}{k}^3}\\
&\quad\eq 0\pmod{p},
\end{align*}
as desired. \qed

\section{Concluding remarks and an open problem}
For any odd prime $p$, let $\big(\frac{\cdot}{p}\big)$ be the Legendre symbol modulo $p$.
Recently, using a WZ pair in \cite{CXH}, Mao \cite{Mao} proved the following two supercongruences: for any odd prime $p$,
\begin{align*}
\sum_{k=0}^{(p-1)/2}(-1)^k (6k+1)\frac{(\frac12)_k^3}{k!^3 8^k}
\equiv p\left(\frac{-2}{p}\right)+\frac{p^3}{4}\left(\frac{2}{p}\right)E_{p-3},
\end{align*}
which was originally conjectured by Sun in \cite[Conjecture 5.1]{Sun0}, and
\begin{align}
\sum_{k=0}^{p-1}(-1)^k (6k+1)\frac{(\frac12)_k^3}{k!^3 8^k}
\equiv p\left(\frac{-2}{p}\right)+\frac{p^3}{16}E_{p-3}(\tfrac14),  \label{eq:sun-mao}
\end{align}
which was the $n=1$ case of a conjecture of Sun \cite[(2.16)]{Sun2}.

It is clear that, for any prime $p\equiv 1\pmod{4}$, we have $\left(\frac{-2}{p}\right)=(-1)^{(p-1)/4}$. Combining the two supercongruences
\eqref{eq:f2-mod4} and \eqref{eq:sun-mao}, we immediately obtain the following result: for any prime $p\equiv 1\pmod{4}$,
\begin{align}
\sum_{k=0}^{p-1}(-1)^k (6k+1)\frac{(\frac12)_k^3}{k!^3 8^k}
\equiv \sum_{k=0}^{p-1}(-1)^k(8k+1)\frac{(\frac{1}{4})_k^3}{k!^3} \pmod{p^4}.  \label{eq:equiv}
\end{align}

Let $[n]=(1-q^n)/(1-q)$ be the $q$-integer, $(x;q)_n=(1-x)(1-xq)\cdots (1-xq^{n-1})$ ($n\geqslant 0$) the $q$-shifted factorial,
and let the $n$-th cyclotomic polynomials be given by
$$
\Phi_n(q)=\prod_{\substack{1\leqslant k\leqslant n\\ \gcd(k,n)=1}}(q-\zeta^k),
$$
where $\zeta$ is an $n$-th primitive root of unity. We believe that the following $q$-analogue of \eqref{eq:equiv} should be true.

\begin{conjecture}\label{conj:first}
Let $n\equiv 1\pmod{4}$ be a positive integer. Then
\begin{align}
\sum_{k=0}^{n-1}(-1)^k[6k+1]\frac{(q;q^2)_k^3}{(q^4;q^4)_k^3}q^{3k^2}
\equiv \sum_{k=0}^{n-1}(-1)^k[8k+1]\frac{(q;q^4)_k^3}{(q^4;q^4)_k^3}q^{2k^2+k} \pmod{[n]\Phi_n(q)^3}. \label{eq:q-equvi}
\end{align}
\end{conjecture}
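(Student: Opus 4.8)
The plan is to prove the $q$-analogue by constructing a $q$-WZ pair that mirrors the pair $(F,G)$ used in the proof of Theorem \ref{main1}, but now keeping the parameter $q$ and the modulus $[n]\Phi_n(q)^3$ intact. First I would replace the classical hypergeometric data by its $q$-analogue: set $\alpha = q$ in the $(q;q^2)$-version for one side and $\alpha = q$ in the $(q;q^4)$-version for the other, and look for ``creative telescoping'' certificates $G_1(n,k), G_2(n,k)$ over the field $\mathbb{Q}(q)$ satisfying a relation of the shape $F_i(n,k-1)-F_i(n,k)=G_i(n+1,k)-G_i(n,k)$ with $F_1(n,0)$ and $F_2(n,0)$ the respective summands in \eqref{eq:q-equvi}. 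Summing over $n$ from $0$ to $n-1$ and then over $k$, exactly as in Section 3, both truncated sums collapse to boundary terms of the form $F_i(n-1,n-1)$ plus a single $k$-sum of $G_i(n,k)$; the whole problem then reduces to showing the two collections of boundary terms agree modulo $[n]\Phi_n(q)^3$.

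Next I would analyse those boundary terms $q$-adically. The key structural input is that $(q;q^2)_k$ (resp.\ $(q;q^4)_k$) is divisible by $\Phi_n(q)$ precisely when $k$ runs past the relevant ``break point'' — for $n\equiv 1\pmod 4$ this is $k=(n-1)/2$ on the first side and $k=(n-1)/4$ on the second, together with their reflections $n-1-k$. Because the modulus is $[n]\Phi_n(q)^3$ and each factorial appears cubed, most terms in $F_1(n-1,n-1)$ and $F_2(n-1,n-1)$ vanish outright modulo $\Phi_n(q)^3$, and $[n]=\prod_{d\mid n, d>1}\Phi_d(q)$ contributes the remaining prime-to-$\Phi_n$ part. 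I expect that after this reduction one is left comparing, on each side, a short explicit rational function of $q$ coming from the ``resonant'' index, and that a $q$-analogue of the Lehmer-type congruences in Lemma \ref{lehmercon} (for instance $\sum_{k=1}^{n-1}q^k/[k]^2\equiv 0\pmod{\Phi_n(q)}$ and the $q$-harmonic identities in Lemma \ref{identities}) kills the discrepancy. Since the right-hand side of \eqref{eq:equiv} is itself a specialisation, a cleaner route may be to prove each $q$-sum is separately congruent to a common closed form — e.g.\ something like $[n]\,q^{(1-n)/\ast}\bigl(\cdots\bigr)$ modulo $[n]\Phi_n(q)^3$ — and then let $q\to 1$ recover \eqref{eq:equiv} and $n=p$ recover \eqref{eq:f2-mod4} and \eqref{eq:sun-mao} as a consistency check.

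The main obstacle, I expect, is locating a single $q$-WZ pair (or a compatible pair of pairs) that simultaneously telescopes both sums and whose certificate is regular enough that the boundary terms are genuinely controlled modulo the \emph{cube} of $\Phi_n(q)$ rather than just modulo $\Phi_n(q)$; the modulus $p^3\to p^4$ refinement in the classical case already required the delicate Lemmas \ref{wzprod}--\ref{sigma}, and a faithful $q$-deformation of those estimates — in particular handling the cross terms $pH_a$, $H_a^{(2)}$ that become $q$-harmonic sums with $q$-dependent weights — is where the real work lies. A secondary difficulty is that unlike the classical identity \eqref{eq:equiv}, here the two sides are \emph{not} obviously term-by-term related, so one cannot simply match summands; the argument must pass through the telescoped form. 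If a direct WZ attack stalls, a fallback is to prove \eqref{eq:q-equvi} by first establishing $q$-congruences for each side modulo $[n]\Phi_n(q)^3$ using the ``creative microscoping'' method of Guo and Zudilin (introducing an extra parameter $a$, proving the congruence modulo $\Phi_n(aq)\Phi_n(q/a)\Phi_n(q)$ and then specialising), which is the technique already cited in the excerpt for the modulus $\Phi_n(q)^3$ analogues.
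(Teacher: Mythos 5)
This statement is a conjecture that the paper deliberately leaves open: the authors only observe that \eqref{eq:q-equvi} holds modulo $[n]\Phi_n(q)^2$, because each side is separately congruent to $(-q)^{(n-1)(n-3)/8}[n]$ by the two cited results of Guo and Zudilin, and they explicitly state that the modulus-$[n]\Phi_n(q)^3$ statement ``seems still rather challenging.'' Your proposal is therefore not being measured against a proof in the paper, and, as written, it is a research plan rather than a proof. The concrete gaps are: (i) no $q$-WZ pair or certificate is actually exhibited for either sum, and producing one whose boundary terms can be controlled modulo the \emph{cube} of $\Phi_n(q)$ is precisely the open difficulty — in the classical case this required the explicit expansions of Lemmas \ref{wzprod}--\ref{sigma} carried to one order higher, and you only hypothesize that ``a faithful $q$-deformation'' of those exists; (ii) the auxiliary $q$-congruences you invoke are not established and at least one is false as stated: $\sum_{k=1}^{n-1}q^k/[k]^2$ is \emph{not} congruent to $0$ modulo $\Phi_n(q)$ (it is congruent to $-(n^2-1)(1-q)^2/24$), so the naive $q$-analogue of Lemma \ref{lehmercon} cannot simply ``kill the discrepancy''; (iii) your cleaner route — showing each sum equals a common closed form modulo $[n]\Phi_n(q)^3$ — would require a $q$-analogue of the third-order term $\frac{p^3}{16}E_{p-3}(\tfrac14)$, i.e.\ a $q$-Euler-polynomial refinement that is exactly what is unknown; the two sides agreeing at that order without a known closed form is the entire content of the conjecture.

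The fallback via creative microscoping is the natural tool (and is what Guo--Zudilin used for the modulus-$\Phi_n(q)^2$ results), but invoking it does not close the gap either: introducing the parameter $a$ and working modulo $\Phi_n(aq)\Phi_n(q/a)$ typically yields congruences modulo $[n]\Phi_n(q)^2$ after specializing $a\to 1$, and pushing it to $[n]\Phi_n(q)^3$ for these particular sums (or for their difference) is an unsolved problem of the same flavour as \cite[Conjecture 6.4]{Guo-aam}, which the paper cites as comparably difficult. In short, the statement remains open, and your outline, while pointed in a reasonable direction, does not constitute a proof.
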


Note that the first author and Zudilin \cite[Theorem 4.4]{GuoZu} proved that, for any positive odd integer $n$,
$$
\sum_{k=0}^{n-1}(-1)^k[6k+1]\frac{(q;q^2)_k^3}{(q^4;q^4)_k^3}q^{3k^2}
\equiv (-q)^{(n-1)(n-3)/8}[n] \pmod{[n]\Phi_n(q)^2},
$$
and they \cite[Theorem 4.9]{GuoZu} also showed that, for positive integers $n\equiv 1\pmod{4}$,
$$
\sum_{k=0}^{n-1}(-1)^k[8k+1]\frac{(q;q^4)_k^3}{(q^4;q^4)_k^3}q^{2k^2+k}
\equiv (-q)^{(n-1)(n-3)/8}[n] \pmod{[n]\Phi_n(q)^2}.
$$
This means that the $q$-congruence \eqref{eq:q-equvi} holds modulo $[n]\Phi_n(q)^2$. Nevertheless, Conjecture \ref{conj:first}
seems still rather challenging. For a similar but more difficult conjecture, we refer the reader to \cite[Conjecture 6.4]{Guo-aam}.


\begin{thebibliography}{10}
\small \setlength{\itemsep}{-.8mm}

\bibitem{Bauer}G. Bauer, Von den Coefficienten der Reihen von Kugelfunctionen einer Variablen, J. Reine Angew. Math. 56 (1859),
101--121.

\bibitem{BB}J.M. Borwein and P.B. Borwein, Pi and the AGM, a study in analytic number theory and computational complexity,
reprint of the 1987 original, Canadian Mathematical Society Series of Monographs and Advanced Texts,
volume 4, John Wiley \& Sons, Inc., New York, 1998.

\bibitem{CXH} Y.G. Chen, X.Y. Xie, and B. He, On some congruences of certain binomial sums, Ramanujan J. (2016) 40, 237--244.

\bibitem{Dilcher}K. Dilcher, Some q-series identities related to divisor functions, Discrete Math. 145 (1995), 83--93.

\bibitem{Gould} H.W. Gould, Combinatorial Identities, Morgantown Printing and Binding Co., 1972.

\bibitem{Guo}V.J.W. Guo,  $q$-Analogues of the (E.2) and (F.2) supercongruences of Van Hamme, Ramanujan J. 49 (2019), 531--544.

\bibitem{Guo-aam}V.J.W. Guo, $q$-Supercongruences modulo the fourth power of a cyclotomic polynomial via creative microscoping, Adv. Appl. Math. 120 (2020), Art. 102078.

\bibitem{GuoLiu} V.J.W. Guo and J.-C. Liu, Some congruences related to a congruence of Van Hamme, Integral Transforms Spec. Funct. 31 (2020), 221--231.

\bibitem{GuoZu}V.J.W. Guo and W. Zudilin, A $q$-microscope for supercongruences, Adv. Math. 346 (2019), 329--358.


\bibitem{He}B. He, Some congruences on truncated hypergeometric series, Proc. Amer. Math. Soc. 143  (2015), 5173--5180.

\bibitem{Lehmer} E. Lehmer, On congruences involving Bernoulli numbers and the quotients of Fermat and
 Wilson, Ann. Math. 39 (1938), 350--360.

\bibitem{Long}L. Long, Hypergeometric evaluation identities and supercongruences, Pacific J. Math. 249 (2011), 405--418.

\bibitem{MOS} W. Magnus, F. Oberhettinger and R.P. Soni, Formulas and Theorems for the Special Functions of Mathematical Physics (3rd edition), Springer, New York, 1966.

\bibitem{Mao}G.S. Mao, Proof of two supercongruences of truncated hypergeometric series $_4F_3$, Acta Math. Sin. (Engl. Ser.) 40 (2024), 1015--1028.

\bibitem{Mortenson}E. Mortenson, A $p$-adic supercongruence conjecture of van Hamme, Proc. Amer. Math. Soc. 136 (2008), 4321--4328.

\bibitem{Ramanujan}S. Ramanujan, Modular equations and approximation to $\pi$, Quart. J. Math. 45 (1914), 350--372.

\bibitem{S} C. Schneider, Symbolic summation assists combinatorics, S\'em. Lothar. Combin. 56 (2007), B56b.

\bibitem{Sun0}Z.-W. Sun, Super congruences and Euler numbers, Sci. China Math. 54 (2011), 2509--2535.

\bibitem{Sun}Z.-W. Sun, A refinement of a congruence result by van Hamme and Mortenson, Illinois J. Math. 56 (2012), 967--979.

\bibitem{Sun2}Z.-W. Sun, Open conjectures on congruences. J. Nanjing Univ. Math. Biquart. 36 (2019), 1--99.

\bibitem{Swisher}H. Swisher, On the supercongruence conjectures of van Hamme, Res. Math. Sci. 2 (2015), Art. 18.

\bibitem{Hamme2}L. Van Hamme, Advanced problem 6407, Amer. Math. Monthly 40 (1982), 703--704.


\bibitem{Hamme}L. Van Hamme, Some conjectures concerning partial sums of generalized hypergeometric
series, in: $p$-Adic functional analysis (Nijmegen, 1996), Lecture
Notes in Pure and Appl. Math. 192, Dekker, New York, 1997,
pp. 223--236.

\bibitem{WangSun} C. Wang and Z.-W. Sun, $p$-Adic analogues of hypergeometric identities and their applications, Nanjing Univ. J. Math. Biquarterly 41 (2024), no. 1, 34--56. (See also arXiv:1910.06856)

\bibitem{WZ1}H.S. Wilf and D. Zeilberger, An algorithmic proof theory for hypergeometric (ordinary and ``$q$") multisum/integral identities,
Invent. Math. 108 (1992), 575--633.


\bibitem{WZ2}H.S. Wilf and D. Zeilberger, Rational function certification of multisum/integral/``$q$" identities,
Bull. Amer. Math. Soc. (N.S.), 27 (1992), 148--153.

\bibitem{Zudilin}W. Zudilin, Ramanujan-type supercongruences, J. Number Theory 129 (2009), 1848--1857.

\end{thebibliography}
\end{document}